\newtheorem{theorem}{Theorem}
\newtheorem*{oldtheorem}{Theorem}
\newtheorem{lemma}{Lemma}[section]
\newtheorem{proposition}{Proposition}
\newtheorem{corollary}{Corollary}
\theoremstyle{remark}
\newtheorem*{notation}{Notation}
\newtheorem*{acknowledgement}{Acknowledgements}
\newcommand{\modulo}[1]{\mathrm{mod}\;#1}
\newcommand{\pmodulo}[1]{\;(\mathrm{mod}\;#1)}
\newcommand{\eps}{\varepsilon}
\newcommand{\sfrac}[2]{{\textstyle \frac{#1}{#2}}}
\title{Sums of Primes and Squares of Primes in Short Intervals}
\author{A.V. Kumchev}
\address{Department of Mathematics\\ Towson University\\ Towson, MD 21252\\ U.S.A.}
\email{akumchev@towson.edu}
\author{J.Y. Liu}
\address{Department of Mathematics\\ Shandong University\\ Jinan, Shandong 250100\\ P.R. China}
\email{jyliu@sdu.edu.cn}
\begin{document}

\begin{abstract}
  Let $\mathcal H_2$ denote the set of even integers $n \not\equiv 1 \pmodulo 3$. We prove that when $H \ge X^{0.33}$, almost all integers $n \in \mathcal H_2 \cap (X, X + H]$ can be represented as the sum of a prime and the square of a prime. We also prove a similar result for sums of three squares of primes. 
\end{abstract}

\subjclass[2000]{11P32, 11L20, 11N36.}

\maketitle

\section{Introduction}
\label{s0}

Additive prime number theory was ushered in by two seminal papers: I.M. Vinogradov's celebrated proof of the three primes theorem \cite{IVin37a} and L.K. Hua's work \cite{Hua38}. In the latter, Hua posed several questions that have represented the central problems in the field ever since. This note is concerned with two of those questions. Let
\begin{align*}
  \mathcal H_2 &= \big\{ n \in \mathbb N \; \big| \; n \not\equiv 1 \pmodulo {3}, \; 2 \mid n \big\}, \\
  \mathcal H_3 &= \big\{ n \in \mathbb N \; \big| \; n \equiv 3 \pmodulo {24}, \; 5 \nmid n \big\}.
\end{align*}
It is conjectured that every sufficiently large $n \in \mathcal H_2$ can be represented as the sum of a prime and the square of another prime, and that every sufficiently large integer $n \in \mathcal H_3$ can be represented as the sum of three squares of primes. However, both these conjectures are still wide open. Let $E_j(X)$ denote the number of integers $n \in \mathcal H_j$, with $n \le X$, which cannot be represented in the desired form. Hua \cite{Hua38} proved that
\begin{equation}\label{0.2}
  E_j(X) \ll X(\log X)^{-A} \qquad (j = 2, 3),
\end{equation}
for some $A > 0$. Later, Schwarz \cite{Schw61} showed that \eqref{0.2} holds for any fixed $A > 0$. Bauer~\cite{Baue99} and Leung and Liu \cite{LeLi93} used the method of Montgomery and Vaughan \cite{MoVa75} to prove that $E_j(X) \ll X^{1 - \delta_j}$ for some (very small) absolute constants $\delta_j > 0$. In the case of sums of three squares, there have been also a series of recent advances \cite{BaLiZh00, HaKu06, Kumc06a, LiZh98, LiZh01, LiZh05}, culminating in the result of Harman and the first author \cite{HaKu06} that $E_3(X) \ll X^{6/7 + \eps}$ for any fixed $\eps > 0$.

Zhan and the second author \cite{LiZh97} considered short interval versions of \eqref{0.2}. They obtained the following result.

\begin{oldtheorem}\label{th0}
  Let $A > 0$ and $\eps > 0$ be fixed. If $X^{7/16 + \eps} \le H \le X$, then
  \begin{equation}\label{0.3}
    E_2(X + H) - E_2(X) \ll H(\log X)^{-A}.
  \end{equation}
  Also, if $X^{3/4 + \eps} \le H \le X$, then
  \begin{equation}\label{0.4}
    E_3(X + H) - E_3(X) \ll H(\log X)^{-A}.
  \end{equation}
  The implied constants in \eqref{0.3} and \eqref{0.4} depend at most on $A$ and $\eps$.
\end{oldtheorem}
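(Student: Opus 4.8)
The plan is to prove both \eqref{0.3} and \eqref{0.4} by the Hardy--Littlewood circle method in its ``almost all'' (second moment) form. Put $L = \log X$ and, for \eqref{0.3}, introduce
\[
  f(\alpha) = \sum_{X/2 < m \le X} \Lambda(m)\, e(m\alpha), \qquad g(\alpha) = \sum_{k^2 \le X/2} \Lambda(k)\, e(k^2\alpha),
\]
so that $R(n) := \int_0^1 f(\alpha) g(\alpha)\, e(-n\alpha)\, d\alpha$ counts, with weights $\Lambda(m)\Lambda(k)$, the solutions of $n = m + k^2$ with $X/2 < m \le X$, $k^2 \le X/2$; for $n \in (X, X+H]$ this is a lower bound for the corresponding $\Lambda$-weighted count of $n = p_1 + p_2^2$ (prime-power terms being negligible and suppressed throughout). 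I would dissect $[0,1]$ into major arcs $\mathfrak M$ --- the union over $q \le Q$, $(a,q) = 1$, of the intervals $|\alpha - a/q| \le Q/(qX)$ --- and minor arcs $\mathfrak m = [0,1] \setminus \mathfrak M$, where, crucially for small $H$, $Q$ is a small power of $X$ rather than a power of $L$. For \eqref{0.4} the setup is the same with $fg$ replaced by $g_1 g_2 g_3$, a product of three quadratic prime sums with ranges arranged so that $p_1^2+p_2^2+p_3^2 \in (X, X+H]$.

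On the major arcs the computation is routine: by the prime number theorem in arithmetic progressions (Siegel--Walfisz suffices, $Q$ being a small power of $X$) and partial summation,
\[
  \int_{\mathfrak M} f(\alpha) g(\alpha)\, e(-n\alpha)\, d\alpha = \mathfrak S_2(n)\, J(n) + O\bigl( \sqrt{X}\, L^{-A} \bigr),
\]
where $J(n) \asymp \sqrt X$ is the archimedean factor and $\mathfrak S_2(n)$ the singular series. The two congruence conditions defining $\mathcal H_2$ are exactly what force the local densities at $2$ and $3$ to be positive, so $\mathfrak S_2(n) \gg L^{-c}$ for $n \in \mathcal H_2$, whence the major-arc contribution $M(n)$ satisfies $M(n) \gg \sqrt X\, L^{-c}$ for all such $n$; the analogue for \eqref{0.4} is that $\mathfrak S_3(n) \gg L^{-c}$ whenever $n \equiv 3 \pmodulo{24}$ and $5 \nmid n$.

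The crux is the minor arcs. With $\Delta(n) = R(n) - M(n) = \int_{\mathfrak m} fg\, e(-n\alpha)\, d\alpha$, I would expand the square, sum over $n$, and use $\sum_{X < n \le X+H} e(-n\gamma) \ll \min(H, \|\gamma\|^{-1})$ ($\|\cdot\|$ being distance to the nearest integer) together with Cauchy--Schwarz to get
\[
  \sum_{X < n \le X+H} |\Delta(n)|^2 \ll L \int_{\mathfrak m} |f(\alpha)|^2 |g(\alpha)|^2\, d\alpha .
\]
Since $M(n) \gg \sqrt X\, L^{-c}$ on $\mathcal H_2$, a bound $\int_{\mathfrak m}|f|^2|g|^2 \ll HX\, L^{-A'}$ with $A'$ arbitrarily large would, by Chebyshev's inequality, leave $\ll H L^{-A}$ values of $n \in \mathcal H_2 \cap (X, X+H]$ with $|\Delta(n)| \ge M(n)/2$; every other such $n$ has $R(n) > 0$ and is represented, which is \eqref{0.3}. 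To bound $\int_{\mathfrak m}|f|^2|g|^2$ I would split the minor arcs by the size of the Dirichlet denominator $q$: where $q$ is large the integrand is pointwise small --- by Vinogradov's bound for the linear prime sum $f$ and a Weyl-type bound for the quadratic prime sum $g$ (via Vaughan's identity and Weyl differencing); where $q$ is of intermediate size one keeps a pointwise bound on one factor and integrates the other, using the fourth moment $\int_0^1 |g|^4 \ll X^{1+\eps}$ (essentially the number of solutions of $k_1^2 + k_2^2 = k_3^2 + k_4^2$) and $\int_0^1 |f|^2 \ll XL$. For \eqref{0.4} the same reduction leads to $\int_{\mathfrak m}|g_1 g_2 g_3|^2$, a sixth moment of quadratic prime sums, treated in the same spirit.

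I expect this last step to be the main obstacle, and it is where the exponents $7/16$ and $3/4$ come from: one needs $\int_{\mathfrak m}|f|^2|g|^2$ (resp. $\int_{\mathfrak m}|g_1g_2g_3|^2$) to beat $HX\,L^{-A'}$, and the tension between wanting $Q$ large --- both to absorb more mass into the main term and to make the intermediate-range exponential sum estimates effective --- and the strength of the available Weyl and mean-value inputs dictates how small $H$ may be. As the linear sum $f$ is markedly easier to control than a quadratic prime sum, \eqref{0.3} permits $H$ down to $X^{7/16+\eps}$, whereas \eqref{0.4}, carrying three quadratic sums and weaker mean-value input, only reaches $X^{3/4+\eps}$. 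A final routine step passes from the $\Lambda$-weights back to the primes themselves.
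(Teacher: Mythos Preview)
Your proposal has a genuine structural gap: the full-length exponential sums $f(\alpha)=\sum_{X/2<m\le X}\Lambda(m)e(m\alpha)$ and $g(\alpha)=\sum_{k^2\le X/2}\Lambda(k)e(k^2\alpha)$ cannot deliver the exponent $7/16$. With that setup your minor-arc target is $\int_{\mathfrak m}|f|^2|g|^2 \ll HX\,L^{-A'}$, i.e.\ a saving of roughly $X^{9/16}$ over the trivial bound when $H=X^{7/16+\eps}$. Any combination of $\sup_{\mathfrak m}|f|$ (Vinogradov), $\sup_{\mathfrak m}|g|$ (Ghosh/Weyl), $\int|f|^2\ll XL$, and $\int|g|^4\ll X^{1+\eps}$ falls well short: for instance $\sup|f|\cdot(\int|f|^2)^{1/2}(\int|g|^4)^{1/2}\ll X^{2-\delta/2+\eps}$ forces $\delta>9/8$, which is impossible. (There is also a slip on the major arcs: Siegel--Walfisz is valid only for moduli up to a power of $\log X$, not a power of $X$.)

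The missing idea, and the one actually used in \cite{LiZh97} as the paper describes, is a short-interval localization. One sets $Y=X^{7/12+\eps/2}$ and restricts $p_1\in\mathbf I_1=[X-Y,X)$, $p_2\in\mathbf I_2=[\tfrac12 Y^{1/2},Y^{1/2})$; the major-arc parameter is only $P=L^B$, so Siegel--Walfisz (in its Huxley short-interval form, whence the $7/12$) genuinely applies. On the minor arcs one does \emph{not} bound $\int_{\mathfrak m}|S_1S_2|^2$ globally; instead, after the $\min(H,\|\alpha-\beta\|^{-1})$ step, one localizes to an interval of length $2/H$ and applies Gallagher's lemma to reduce to a short-interval second moment
\[
  \int_{Y/4}^{Y}\bigg|\sum_{t<m^2\le t+H}\lambda_2(m)\,e(\alpha m^2)\bigg|^2 dt \ll H^2L^{-A},
\]
which is then established by bilinear exponential-sum estimates (Lemmas~\ref{l1}--\ref{l2} here, originating in \cite{LiZh97}) valid for $H\ge Y^{3/4+\eps/2}$. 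The exponent $7/16$ is precisely the product $\tfrac{7}{12}\cdot\tfrac34$; the $3/4$ for \eqref{0.4} arises analogously. Your outline captures the ``second-moment over $n$'' philosophy but omits this localization and the short-interval mean-value input, which are the heart of the argument.
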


The admissible range for $H$ in the second part of this theorem was extended to $H \ge X^{1/2 + \eps}$ by Mikawa~\cite{Mika97}, and then recently to $H \ge X^{7/16 + \eps}$ by Mikawa and Peneva \cite{MiPe07}.

The proofs in \cite{LiZh97, Mika97, MiPe07} use the Hardy--Littlewood circle method to count representations of the desired form on average over $n$. For example, let $Y = X^{7/12 + \eps/2}$ and write
\begin{equation}\label{0.5}
  R_2(n) = \sum_{ \substack{ p_1 + p_2^2 = n\\ p_j \in \mathbf I_j}} 1,
\end{equation}
where $p_1$ and $p_2$ denote primes and
\begin{equation}\label{0.6}
  \mathbf I_1 = [X - Y, X), \quad \mathbf I_2 = \big[ \sfrac 12Y^{1/2}, Y^{1/2} \big).
\end{equation}
Deferring some standard notation to the end of this Introduction, we now define
\begin{align}
  r_2(n) &= r_2(n; X, Y) = \sum_{ \substack{ m_1 + m_2^2 = n\\ m_j \in \mathbf I_j}} \frac 1{(\log m_1)(\log m_2)}, \label{0.7}\\
  \mathfrak S_2(n, P) &= \sum_{q \le P} \frac {\mu(q)}{\phi(q)^2} \sum_{ \substack{ 1 \le a \le q\\ (a, q) = 1}} S(q, a)e(-an/q), \label{0.8}
\end{align}
where $m_1$ and $m_2$ denote integers and
\begin{equation}\label{0.1}
  S(q, a) = \sum_{ \substack{ 1 \le x \le q\\ (x, q) = 1}} e \big( ax^2/ q \big).
\end{equation}
The estimate \eqref{0.3} was established in \cite{LiZh97} by showing that when $H \ge Y^{3/4 + \eps/2}$, the asymptotic formula
\[
  R_2(n) = r_2(n)\mathfrak S_2(n, P) \big( 1 + O \big( (\log X)^{-A} \big) \big)
\]
holds for all but $O \big( H(\log X)^{-A} \big)$ integers $n \in \mathcal H_2 \cap (X, X + H]$. Here, $P = (\log X)^B$ for some $B = B(A) > 0$. In the present paper, we demonstrate how a rather simple sieve idea yields a similar result for $H \ge Y^{2/3 + \eps/2}$. This leads to the following theorem.

\begin{theorem}\label{thm1}
  Let $A > 0$, $\delta > 0$ and $\eps > 0$ be fixed, and suppose that $X^{7/18 + \eps} \le H \le X$. There exists a $B = B(A) > 0$ such that when $P = (\log X)^B$, the asymptotic formula
  \begin{equation}\label{0.9}
    R_2(n) = r_2(n)\mathfrak S_2(n, P) \big( 1 + O \big( (\log X)^{-1 + \delta} \big) \big)
  \end{equation}
  holds for all but $O \big( H(\log X)^{-A} \big)$ integers $n \in \mathcal H_2 \cap (X, X + H]$. The implied constants depend at most on $A, \delta$ and $\eps$.
\end{theorem}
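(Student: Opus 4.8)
The plan is to estimate $R_2(n)$ by the Hardy--Littlewood circle method, along the general lines of \cite{LiZh97}, but with a sieve step inserted on the $p_2$--side of the equation $p_1+p_2^2=n$; this is what both widens the range of $H$ and accounts for the (slightly weak) error term in \eqref{0.9}. Concretely, I would work with the unweighted exponential sums $g_1(\alpha)=\sum_{p\in\mathbf I_1}e(\alpha p)$ and $g_2(\alpha)=\sum_{p\in\mathbf I_2}e(\alpha p^2)$, so that $R_2(n)=\int_0^1 g_1(\alpha)g_2(\alpha)e(-\alpha n)\,d\alpha$, the weights $\tfrac1{\log m_1}$, $\tfrac1{\log m_2}$ in $r_2(n)$ being recovered from the major arcs by partial summation. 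Since $\mathbf I_1$ has length $Y$, I would take the Farey dissection at a level $Q$ with $P<q\le Q\le Y$: the major arcs $\mathfrak M$ are the usual neighbourhoods of fractions $a/q$ with $q\le P=(\log X)^{B}$, and $\mathfrak m$ is the complement, every $\alpha$ having by Dirichlet's theorem an approximation with denominator $\le Q$.

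On the major arcs the treatment is essentially that of \cite{LiZh97}: one evaluates $g_1$ via Huxley's prime number theorem in short intervals --- which is exactly why $Y$ is pinned at $X^{7/12+\eps/2}$, just above Huxley's threshold --- together with the prime number theorem in arithmetic progressions, and $g_2$ via the analogous estimate for $\sum_{p\le Y^{1/2}}e(ap^2/q)$, to produce the main term $r_2(n)\mathfrak S_2(n,P)$. To keep the error and, crucially, the size of the exceptional set under control, I would feed the contribution of the nontrivial zeros of the relevant Dirichlet $L$--functions into Huxley's zero--density estimate and bound it in mean square over $n\in(X,X+H]$; squaring and summing, this costs $\ll HY(\log X)^{-A'}$ away from a set of $n$ of size $O(H(\log X)^{-A})$.

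This leaves the minor arcs, which is where the hypothesis $H\ge Y^{2/3+\eps/2}$ (equivalently $H\ge X^{7/18+\eps}$) enters, and it is the step I expect to be the crux. Before dissecting I would apply a sieve to the variable $p_2$: a number of $\mathbf I_2$ is prime exactly when it has no prime factor below $Y^{1/4}$, and a few iterations of Buchstab's identity down to a level $z_0$ (with $\log z_0$ of order $\log X/\log\log X$), followed by the fundamental lemma to truncate the $z_0$--sieve at level $D=Y^{\eta}$ with small $\eta$, express the indicator of primes in $\mathbf I_2$ as a short combination of terms carrying an explicit bilinear factorisation, with a relative error of size $e^{-u}$ where $u\asymp\log\log X$; choosing $u=(1-\delta)\log\log X$ produces precisely the factor $1+O((\log X)^{-1+\delta})$ in \eqref{0.9}. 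After this step $g_2(\alpha)$ is replaced by linear combinations of sums $\sum_{j}a_j\,e\big(\alpha(md)^2j^2\big)$ with $md\le Y^{1/2}$, $d\le D$, and bounded divisor--type coefficients $a_j$ --- genuine quadratic Weyl sums rather than sums over squares of primes, hence amenable to Weyl's inequality and to the fourth--moment bound $\int_0^1\big|\sum_{j\le J}e(\beta j^2)\big|^4\,d\beta\ll J^2(\log J)^{O(1)}$ with \emph{no} Vaughan--type loss. One then bounds the minor--arc mean square
\[
  \sum_{n\in(X,X+H]}\Big|\int_{\mathfrak m}g_1(\alpha)g_2(\alpha)e(-\alpha n)\,d\alpha\Big|^2 \ll (\log X)\int_{\mathfrak m}|g_1(\alpha)|^2|g_2(\alpha)|^2\,d\alpha,
\]
estimating the last integral by splitting $\mathfrak m$ according to the size of the denominator $q$ and on each piece playing Huxley's minor--arc bound for $g_1$ against the Weyl and fourth--moment bounds for the sieved quadratic sums; the exponents close exactly at $H=Y^{2/3+\eps/2}$, whereas bounding one of the two factors trivially would recover only the range $H\ge Y^{3/4+\eps/2}$ of \cite{LiZh97}.

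Putting the major and minor arcs together yields, for each $A'>0$, a bound $\sum_{n\in\mathcal H_2\cap(X,X+H]}\big|R_2(n)-r_2(n)\mathfrak S_2(n,P)\big|^2\ll HY(\log X)^{-A'}$ with $B=B(A')$, after transferring through the $(\log X)^{-1+\delta}$ sieve error and undoing the partial summations; since the main term has size $r_2(n)\mathfrak S_2(n,P)\asymp Y^{1/2}(\log X)^{-2}$ for $n\in\mathcal H_2$, Chebyshev's inequality then gives \eqref{0.9} for all but $O(H(\log X)^{-A})$ such $n$. The main obstacle, as indicated, is the minor--arc estimate: one must extract simultaneously the saving coming from the shortness of $\mathbf I_1$ (through Huxley's density theorem, which is what fixes $Y$) and the saving coming from the bilinear/Weyl structure uncovered by the sieve, controlling in particular the arcs with $q$ close to $Y$, where both savings are at their weakest, while keeping every error term a fixed power of $\log X$ below $HY$.
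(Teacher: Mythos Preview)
Your overall architecture --- sieve the $p_2$-variable, then run the circle method --- matches the paper, but the minor-arc step is not as you describe, and the displayed inequality there is wrong. One has only the Bessel bound $\sum_{n}|\int_{\mathfrak m}g_1g_2\,e(-\alpha n)\,d\alpha|^2\le\int_{\mathfrak m}|g_1g_2|^2\,d\alpha$, with no $H$-saving, and $\int_{\mathfrak m}|g_1|^2|g_2|^2$ is far too large: no combination of Weyl's inequality, fourth moments, and pointwise bounds on $g_1$ closes at $H=Y^{2/3+\eps}$. There is also no ``Huxley minor-arc bound'' for the short prime sum $g_1$; in fact the paper uses \emph{nothing} about $S_1$ on $\mathfrak m$ beyond the trivial $\int_0^1|S_1|^2\ll Y$. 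The actual mechanism is: open the square, bound $\sum_{X<n\le X+H} e(-(\alpha-\beta)n)\ll\min(H,\|\alpha-\beta\|^{-1})$, apply Cauchy so that $S_1$ enters only through its $L^2$-norm, and reduce the $S_2$-contribution to $\sup_{\alpha\in\mathfrak m}\int_{-1/H}^{1/H}|S_2(\alpha+\beta)|^2\,d\beta$. Gallagher's lemma converts this to a mean square over short $t$-intervals, namely to bounds of the shape $\int_{Y/4}^{Y}\big|\sum_{t<m^2\le t+H}\lambda_2(m)e(\alpha m^2)\big|^2\,dt\ll H^2L^{-A}$, and it is precisely this estimate --- proved via type-I/type-II sums with the constraint that the Buchstab parameter satisfy $z\le HY^{-1/2}$ --- that pins the threshold at $H\ge Y^{2/3+\eps}$.

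The sieve is also set up differently, and your account of the $(\log X)^{-1+\delta}$ error is not the paper's. The paper does \emph{not} sieve $p_2$ down to a tiny $z_0$ with $\log z_0\asymp\log X/\log\log X$ and then invoke the fundamental lemma. Instead it takes $z_0=Y^{1/4}P^{-2}$, so that $\lambda_0(m)=\Phi(m,z_0)$ differs from $\varpi(m)$ on $\mathbf I_2$ only by products $p_2p_3$ with $p_2$ in the tiny logarithmic window $(z_0,Y^{1/4}]$. A single further Buchstab step down to $z_1=Y^{1/6+\eps/2}$ puts $\lambda_0$ into the form required by the minor-arc mean-square lemmas, and Proposition~\ref{p1} then gives the asymptotic for $R(n;\varpi,\lambda_0)$. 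The leftover $R_0(n)=R(n;\varpi,\lambda_0)-R_2(n)$ is bounded by applying an upper-bound Rosser--Iwaniec sieve \emph{to the $p_1$-variable}, with sifting level $z=\exp(L^{1-\delta/2})$; the factor $(\log X)^{-1+\delta}$ in \eqref{0.9} comes from the shortness of the $p_2$-range in $R_0$ together with $(\log z)^{-1}$ and the constraint $z\le\exp(L^{1-\eps})$ needed to match the partial product $\mathfrak P_2(n,z)$ back to $\mathfrak S_2(n,P)$ on average over $n$ --- not from a fundamental-lemma truncation.
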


In particular, it follows from this theorem that \eqref{0.3} holds when $H \ge X^{7/18 + \eps}$. The error term in \eqref{0.9} is somewhat weaker than the error term in the analogous result in \cite{LiZh97}, but that is a small price to pay for the longer range for $H$.

It appears very difficult to improve further on Theorem \ref{thm1}, if an asymptotic formula similar to \eqref{0.9} is required. On the other hand, if one is content merely with the existence of representations of $n$ as the sum of a prime and a square of a prime, then further progress is possible. Indeed, combining the circle method with Harman's sieve method (see \cite{Harm83a, Harm96}), we obtain the following result.

\begin{theorem}\label{thm2}
  Let $A > 0$ be fixed and suppose that $X^{0.33} \le H \le X$. Then \eqref{0.3} holds.
\end{theorem}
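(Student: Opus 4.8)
The plan is to run the Hardy--Littlewood circle method as in the proof of Theorem \ref{thm1}, but to replace the demand for an asymptotic formula for $R_2(n)$ by the weaker requirement $R_2(n) > 0$, which I would extract by applying Harman's sieve to the prime variable $p_1$. With $Y = X^{7/12 + \eps/2}$ and $\mathbf I_1, \mathbf I_2$ as in \eqref{0.6}, set $f(\alpha) = \sum_{p_1 \in \mathbf I_1} e(p_1\alpha)$ and $g(\alpha) = \sum_{p_2 \in \mathbf I_2} e(p_2^2\alpha)$, so that $R_2(n) = \int_0^1 f(\alpha)g(\alpha)e(-n\alpha)\,d\alpha$. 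Dissect $[0,1]$ into the major arcs $\mathfrak M(P)$ about fractions with denominator $q \le P = (\log X)^B$ and the complementary minor arcs $\mathfrak m$. The major-arc analysis rests on the same estimates as in the proof of Theorem \ref{thm1} (the prime number theorem in arithmetic progressions for the linear factor, Gauss sums for $g$); for the whole integral it would give $r_2(n)\mathfrak S_2(n,P)\bigl(1 + O((\log X)^{-1})\bigr)$, which for $n \in \mathcal H_2 \cap (X, X+H]$ is $\gg Y^{1/2}(\log X)^{-2}$ because $\mathfrak S_2(n,P) \gg 1$ on $\mathcal H_2$. Hence it suffices to show that, for all but $O(H(\log X)^{-A})$ integers $n \in (X, X+H]$, the minor-arc contribution to the relevant sums is $\ll Y^{1/2}(\log X)^{-2-A}$; by Bessel's inequality together with a smooth majorant for $(X,X+H]$ this reduces to (short-interval) mean-value bounds for integrals of the form $\int_{\mathfrak m}|\Phi(\alpha)g(\alpha)|^2\,d\alpha$, where $\Phi$ runs through the pieces of $f$.

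The decisive step is to avoid estimating $\int_{\mathfrak m}|fg|^2$ directly --- the move that confines Theorem \ref{thm1} to $H \ge X^{7/18+\eps}$ --- and instead to apply Harman's sieve to the sequence $\mathcal A = \mathcal A(n) = \{\, n - p_2^2 : p_2 \in \mathbf I_2,\ n - p_2^2 \in \mathbf I_1 \,\}$, whose members are of size $\asymp X$, so that $R_2(n) = S(\mathcal A, (2X)^{1/2})$ up to a negligible term. Buchstab's identity, iterated and truncated so as to retain only the pieces one can handle, yields $R_2(n) \ge \sum_{\ell}\lambda_\ell\,\Sigma_\ell(n)$, where each $\Sigma_\ell(n)$ is either a Type I sum $\sum_{d \le D}a_d\,\#\{a \in \mathcal A : d \mid a\}$ with $D = X^{\theta_1}$, or a Type II bilinear sum $\sum_{M < m \le 2M}\sum_{k}a_m b_k\,\#\{a \in \mathcal A : a = mk\}$ with $M$ restricted to certain admissible ranges $X^{\theta_2} \le M \le X^{\theta_3}$, the coefficients $\lambda_\ell, a_d, a_m, b_k$ being bounded by divisor functions. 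Splitting each $\Sigma_\ell(n)$ into its major-arc and minor-arc parts via orthogonality, the major-arc parts reassemble --- by the usual feature of Harman's sieve that the main terms sum back up --- into a quantity that is $\gg Y^{1/2}(\log X)^{-2}$, the sieve loss being only a bounded Buchstab constant, so a positive proportion of the main term is all that one needs; it then remains to bound each minor-arc part in mean square over $n \in (X,X+H]$.

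That last step carries the whole argument. A Type I piece leads, after orthogonality, to $\int_{\mathfrak m}\Phi_d(\alpha)g(\alpha)e(-n\alpha)\,d\alpha$ with $\Phi_d$ essentially a geometric sum over multiples of $d \le D$ in $\mathbf I_1$; since $d$ is small, this is controlled by elementary estimates together with the Hua-type fourth-moment bound $\int_0^1|g(\alpha)|^4\,d\alpha \ll Y^{1+\eps}$. A Type II piece brings in the bilinear sum $\sum_m\sum_k a_m b_k\,e(mk\alpha)$ coupled with $g$; after Cauchy--Schwarz in one variable, bounding $\sum_{X < n \le X+H}\bigl|\int_{\mathfrak m}(\cdots)e(-n\alpha)\,d\alpha\bigr|^2$ reduces to counting, with a minor-arc weight, the solutions of a Diophantine inequality of the shape $|m_1 k_1 + p_1^2 - m_2 k_2 - p_2^2| \ll H$, and it is exactly here that the length of the short interval enters: the quality of this count --- and with it the admissible window $[\theta_2,\theta_3]$ for the bilinear parameter $M$ --- deteriorates as $H$ decreases.

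I expect the main obstacle to be twofold. First, one must establish the Type II estimates with enough uniformity in $H$ for them to remain non-trivial down to $H = X^{0.33}$; this should require the double large sieve (or Weyl differencing) combined with the circle-method dissection and an efficient mean-value input for $g$. Second, one must carry the Buchstab iteration through --- with the customary reversals of the roles of the variables that convert otherwise intractable sums into accessible ones --- and verify, presumably by a numerical optimisation of the iteration, that the ranges of $M$ made available by the Type II estimates when $H \ge X^{0.33}$ do suffice to cover the sieve and leave a positive lower bound; the exponent $0.33$ should be (a rounding of) the threshold that emerges from this optimisation.
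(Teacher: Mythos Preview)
Your plan misidentifies where the saving that produces the exponent $0.33$ actually comes from, and as written it would not reach that exponent.

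In the paper the number $0.33$ is the product $\theta_1\theta_2$ with $\theta_1=0.55$ and $\theta_2=\tfrac35$, obtained by two \emph{independent} devices combined through a vector-sieve inequality
\[
R(n;\varpi,\lambda_0)\;\ge\;R(n;\lambda_1^+,\lambda_2^-)+R(n;\lambda_1^-,\lambda_2^+)-R(n;\lambda_1^+,\lambda_2^+).
\]
The functions $\lambda_2^{\pm}$ are Buchstab decompositions of the \emph{quadratic} variable, designed so that the short-interval mean value $\int_{Y/4}^{Y}\bigl|\sum_{t<m^2\le t+H}\lambda_2(m)e(\alpha m^2)\bigr|^2\,dt\ll H^2L^{-A}$ (hypothesis $(\mathrm A_{2.3})$) holds via Type~I/Type~II estimates for $e(\alpha m^2k^2)$; this pushes $\theta_2$ from $\tfrac23$ down to $\tfrac35$. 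The functions $\lambda_1^{\pm}$ are the Baker--Harman--Pintz sieve weights, chosen so that the \emph{major-arc} hypothesis $(\mathrm A_{1.2})$ --- essentially a short-interval Siegel--Walfisz statement for $\sum_{m\in J\subset[X-Y,X)}\lambda_1(m)\chi(m)$ --- holds already for $Y=X^{0.55}$; this pushes $\theta_1$ from $\tfrac7{12}$ down to $0.55$. The sieve on $p_1$ plays \emph{no role on the minor arcs}: in the minor-arc argument the linear sum enters only through $\int_0^1|S_1|^2\ll Y$, and all the cancellation comes from $S_2$.

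Your proposal does the opposite. You keep $Y=X^{7/12+\eps/2}$ and try to extract minor-arc cancellation from Type~II pieces of the \emph{linear} sum, while leaving $g$ untouched. Two things go wrong. First, with $\theta_1=\tfrac7{12}$ fixed, reaching $H=X^{0.33}$ forces $\theta_2=\log H/\log Y\approx 0.566<\tfrac7{12}$, and then the major-arc input for $g$ (hypothesis $(\mathrm A_{2.2})$ with $\lambda_2=\varpi$) already fails; your statement that ``the major-arc analysis rests on the same estimates as in the proof of Theorem~\ref{thm1}'' is not correct in this range. Second, your proposed minor-arc mechanism --- bounding $\sum_{n}\bigl|\int_{\mathfrak m}\Phi\,g\,e(-n\alpha)\,d\alpha\bigr|^2$ by counting solutions of $|m_1k_1+p_1^2-m_2k_2-p_2^2|\ll H$ --- asks for bilinear cancellation in linear exponential sums over the very short interval $\mathbf I_1=[X-Y,X)$, further localised to windows of length $H$; the required Type~II ranges for $M$ are not supplied, and there is no indication that they are wide enough to close a Harman iteration at $H=X^{0.33}$. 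The paper avoids this entirely: its Type~I/Type~II inputs are for the \emph{quadratic} sums $e(\alpha m^2k^2)$ in intervals of length $H$ around $Y$, which is a much more favourable situation.
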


The exponent $0.33$ is not the exact limit of the method but just a reasonably close upper bound for that limit. It can be easily ``improved'' to $0.3275$ by choosing $\theta_2 = 0.595$ in the calculations in \S \ref{s5}. However, it appears that in order to replace $0.33$ by even $0.325$, one needs a substantially new idea.

The methods used in the proofs of Theorems \ref{thm1} and \ref{thm2} can be easily adapted to improve on the result of Mikawa and Peneva on sums of three squares of primes. In particular, when $X^{7/18 + \eps} \le H \le X$, we obtain an asymptotic result similar to Theorem \ref{thm1}. The application of the sieve method to this problem, on the other hand, is somewhat less successful. We obtain the following analogue of Theorem~\ref{thm2}.

\begin{theorem}\label{thm3}
  Let $A > 0$ be fixed and suppose that $X^{7/20} \le H \le X$. Then \eqref{0.4} holds.
\end{theorem}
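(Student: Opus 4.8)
The plan is to parallel the proof of Theorem \ref{thm2}, replacing the binary problem $p_1 + p_2^2 = n$ by the ternary problem $p_1^2 + p_2^2 + p_3^2 = n$, and to show that the combination of the circle method with Harman's sieve that gives the exponent $0.33$ for sums of a prime and a square of a prime yields the (slightly worse) exponent $7/20$ here. First I would set up the generating functions on a short interval: with $Y$ a suitable power of $X$ (the analogue of $Y = X^{7/12 + \eps/2}$, but tuned to the ternary problem), put $f_j(\alpha) = \sum_{p \in \mathbf J_j} (\log p)\, e(\alpha p^2)$ over short ranges $\mathbf J_j$ for the three prime variables, and study $\int_{\mathfrak M \cup \mathfrak m} f_1 f_2 f_3\, e(-\alpha n)\, d\alpha$ on average over $n \in \mathcal H_3 \cap (X, X+H]$. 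On the major arcs $\mathfrak M$ one obtains, as in \cite{LiZh97, Mika97, MiPe07}, the expected main term $c\, r_3(n)\,\mathfrak S_3(n,P)$ for all but $O(H(\log X)^{-A})$ admissible $n$, together with an essentially classical singular series bound; this part contributes no obstruction beyond what is already in the literature. The whole difficulty is therefore concentrated in bounding the minor-arc contribution, in mean square over $n$, by $o(H \cdot (\text{main term})^2 / H)$ — equivalently in showing $\int_{\mathfrak m} |f_1 f_2 f_3|^2\, d\alpha$ is small enough after integrating over the short window of $n$.

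The key step, and the one that forces the exponent $7/20$ rather than something closer to $1/3$, is the replacement of one of the three prime-square exponential sums by a sieve-theoretic object. Following Harman's method (\cite{Harm83a, Harm96}), I would not estimate $f_3(\alpha)$ directly but instead decompose the indicator of the primes in $\mathbf J_3$ via a Buchstab-type identity into "Type I" sums $\sum_{d \le D} a_d \sum_m e(\alpha (dm)^2)$ and "Type II" sums $\sum_{D < d \le D'} a_d b_e \sum \cdots$ with well-factorable coefficients, carry out the combinatorial decomposition so that every resulting bilinear piece has its variables in ranges where a nontrivial estimate is available, and discard (with the correct sign) any residual pieces that cannot be controlled — it is the loss incurred in discarding these pieces, quantified by a numerical optimization of the Buchstab dissection, that yields a positive lower bound for the number of representations only down to $H = X^{7/20}$. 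The individual bilinear estimates needed are short-interval large-value and mean-value bounds for exponential sums over squares, of exactly the type used for Theorem \ref{thm2}; the ternary setting is in fact more forgiving here because one has two "honest" prime-square sums $f_1, f_2$ to spend, via Cauchy--Schwarz and the fourth-moment/divisor bounds for $\sum_{x} e(\alpha x^2)$, before invoking the sieve on the third.

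I expect the main obstacle to be arranging the Buchstab decomposition and the accompanying numerical optimization so that the arithmetic-information budget — how far the coefficient supports $D, D'$ may extend while still permitting a Type II estimate with a power saving on the minor arcs — closes at $\theta_3$ corresponding to $H = X^{7/20}$. Concretely, after reducing to a weighted counting problem one must verify an inequality of the shape $\int_{\mathfrak m} |f_1 f_2| \cdot |F_3|\, d\alpha \ll (\log X)^{-A} H^{-1} \cdot (\text{expected}) $, where $F_3$ is the sieve approximation to $f_3$; bounding this requires splitting $\mathfrak m$ according to the rational approximation $|\alpha - a/q| \le q^{-1} Q^{-1}$, using a Weyl/van der Corput bound for $f_1 f_2$ on $q$ moderately large and a Type I/II bound for $F_3$ on $q$ small, and checking that the two regimes overlap. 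Getting the exponents in these two complementary estimates to meet — and confirming, by the explicit Buchstab computation sketched for Theorem \ref{thm2} in \S \ref{s5}, that the sieve loss is compatible with a positive main term down to $7/20$ but not below — is where the real work lies; the rest is a routine adaptation of \cite{LiZh97, MiPe07} and of the argument for Theorem \ref{thm2}.
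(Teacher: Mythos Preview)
Your broad strategy---circle method plus a sieve replacement for one of the three prime-square variables---matches the paper's, but you misidentify where the exponent $7/20$ comes from, and that misidentification is a genuine gap. In the paper the first two prime squares are bundled into a single generating function $S_1(\alpha) = \sum_{p_1^2+p_2^2 \in \mathbf I_1} e(\alpha(p_1^2+p_2^2))$, and the third variable carries \emph{exactly} the sieve weight $\lambda_2^-$ already constructed in \S\ref{s5} for Theorem~\ref{thm2}. The Buchstab numerics are therefore identical to those of Theorem~\ref{thm2}: one has $\sigma_2^-(\theta_2) > 0$ once $\theta_2 \ge 3/5$, and no new optimisation of the dissection is performed. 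What changes is the constraint on $\theta_1$. In Theorem~\ref{thm2} the Baker--Harman--Pintz functions $\lambda_1^{\pm}$ allow $\theta_1 = 0.55$; here there is no analogue for the pair $(p_1,p_2)$, so both are kept as honest primes and the major-arc approximation $S_1 \approx S_1^*$ on $\mathfrak M_0$ comes from \cite[Lemma~3]{MiPe07}, which requires $\theta_1 > 7/12$. The exponent is then simply $\theta_1\theta_2 = (7/12)(3/5) = 7/20$.

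So your claim that the major arcs ``contribute no obstruction beyond what is already in the literature'' is exactly backwards: the literature's constraint on $\theta_1$ \emph{is} the obstruction, and the minor arcs impose nothing beyond what was already verified for $\lambda_2^-$ in \S\ref{s5}. Correspondingly, the paper uses no Weyl or van~der~Corput bound on $f_1f_2$ and no fourth-moment input; the minor-arc argument is the unchanged template \eqref{e2.17}--\eqref{e2.20}, using only the trivial $\int_0^1 |S_1|^2\,d\alpha \ll YL^3$ together with hypothesis~(A$_{2.3}$) for $\lambda_2^-$, established via Lemmas~\ref{l1}--\ref{l3}. Your remark that the ternary setting is ``more forgiving'' is likewise reversed: it is less forgiving, precisely because the sieve on the first variable is unavailable, which is why $7/20 > 0.33$ (cf.\ the comment following Theorem~\ref{thm2} in the Introduction).
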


One can use Theorem \ref{thm3} to estimate the number of exceptions in a short interval for representations as sums of four squares of primes.
Let $E_4(X)$ denote the number of integers $n$, with $n \le X$ and $n \equiv 4 \pmodulo {24}$, which cannot be represented as the sum of squares of primes. Combining Theorem \ref{thm3} with known results on the difference between two consecutive primes, we obtain the following result.
   
\begin{corollary}\label{c1}
  Let $A > 0$ be fixed and suppose that $X^{0.27} \le H \le X$. Then
  \[
    E_4(X + H) - E_4(X) \ll H(\log X)^{-A}.
  \]
\end{corollary}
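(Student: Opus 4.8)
The plan is to deduce Corollary~\ref{c1} from Theorem~\ref{thm3} by peeling off a single square of a prime: for a suitably chosen prime $p_0$ one writes $n = p_0^2 + n_1$ and shows that, for almost all $n$, the integer $n_1 = n - p_0^2$ is a sum of three squares of primes. Two congruence conditions must be monitored. First, if $n \equiv 4 \pmodulo{24}$ and $p_0 > 3$ then $p_0^2 \equiv 1 \pmodulo{24}$, so $n_1 \equiv 3 \pmodulo{24}$ automatically. Second, Theorem~\ref{thm3} only asserts something about integers prime to $5$, so we need $5 \nmid n_1$; since the squares modulo $5$ are $0, 1, 4$, this imposes \emph{no} condition on $p_0$ when $n \equiv 0, 2$ or $3 \pmodulo 5$, while when $n \equiv 1 \pmodulo 5$ (resp.\ $n \equiv 4 \pmodulo 5$) it merely asks that $p_0$ be a quadratic non-residue (resp.\ residue) modulo $5$, i.e.\ that $p_0$ lie in a prescribed union $\mathcal A$ of two reduced residue classes modulo $5$. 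Splitting $(X, X+H]$ according to $n \bmod 5$, we fix in each of the five classes such a target set $\mathcal A \subseteq (\mathbb Z/5\mathbb Z)^\times$ for $p_0$, so that $n_1 \in \mathcal H_3$ for every $n$ in the class under consideration.

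Next one fixes the size of $p_0$. For the interval $(Y', Y'+H]$ containing $n_1$, with $Y' = X - p_0^2$, to fall within the reach of Theorem~\ref{thm3} we need $(Y')^{7/20} \le H \le Y'$, and this holds as soon as
\[
  H \le X - p_0^2 \le cH^{20/7}
\]
for a small fixed $c \in (0,1)$ (so that $(cH^{20/7})^{7/20} = c^{7/20}H < H$). Hence we must produce a prime $p_0$ with $p_0 \bmod 5 \in \mathcal A$ in the interval $I = \bigl[\sqrt{X - cH^{20/7}},\; \sqrt{X - H}\,\bigr]$. A short computation gives $|I| \gg H^{20/7}X^{-1/2}$; writing $H = X^\beta$ and $Y = X^{1/2}$ this reads $|I| \gg Y^{\,40\beta/7 - 1}$, an interval of length a fixed power of $Y$ whose exponent exceeds $0.525$ precisely when $\beta > \frac{7}{40}(1.525) = 0.266875$. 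Known bounds on the gaps between consecutive primes (of Baker--Harman--Pintz type) guarantee a prime in any interval of length $Y^{\vartheta}$ near $Y$ with $\vartheta$ a little above $0.525$, and the same is true with $p_0$ restricted to a fixed reduced residue class — or union of classes — modulo $5$; since $\beta \ge 0.27$ is well above the break-even point, such a $p_0$ exists. This balancing of the exponent $7/20$ of Theorem~\ref{thm3} against the exponent $0.525$ coming from prime gaps is exactly what produces the value $0.27$. (We assume here $X^{0.27} \le H < X^{7/20}$. When $X^{7/20} \le H < \sfrac12 X$ one runs the same argument with a prime $p_0$ of size $\asymp X^{1/4}$, so that $Y' \asymp X$; and the range $H \ge \sfrac12 X$ is trivial, since then $E_4(X+H) - E_4(X) \ll X(\log X)^{-A}$ by classical results.)

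With $p_0$ fixed, the map $n \mapsto n_1 = n - p_0^2$ carries the integers $n \in (X, X+H]$ lying in the given residue class modulo $120$ injectively into $\mathcal H_3 \cap (Y', Y'+H]$, where $Y' = X - p_0^2$ satisfies $(Y')^{7/20} \le H \le Y'$. By Theorem~\ref{thm3}, all but $O\bigl(H(\log X)^{-A}\bigr)$ of the integers of $\mathcal H_3 \cap (Y', Y'+H]$ — and hence all but $O\bigl(H(\log X)^{-A}\bigr)$ of the relevant $n_1$ — are sums of three squares of primes, and for any such $n_1$ the integer $n = p_0^2 + n_1$ is a sum of four squares of primes. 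Adding the exceptional sets over the (at most five) residue classes of $n$ modulo $5$ gives $E_4(X+H) - E_4(X) \ll H(\log X)^{-A}$, as claimed.

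The only genuinely non-routine step is the construction of $p_0$: one must find a prime lying simultaneously in a short interval of length about $X^{0.27}$ around $X^{1/2}$ \emph{and} in a prescribed pair of residue classes modulo $5$. For the three values of $n \bmod 5$ that place no constraint on $p_0$, the classical bound on gaps between consecutive primes suffices; for the remaining two values one needs the analogous bound for primes in a fixed reduced residue class modulo $5$, and it is this input, together with the exponent $7/20$ of Theorem~\ref{thm3}, that pins down the threshold $X^{0.27}$. Everything else — the congruence bookkeeping and the passage from Theorem~\ref{thm3} to the stated estimate for $E_4$ — is straightforward.
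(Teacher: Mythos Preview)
Your argument is correct and follows essentially the same route as the paper. The paper's proof is slightly more economical in two respects: it splits into only two cases modulo $5$ (taking $q_1 \equiv 1 \pmodulo 5$ when $n \not\equiv 1 \pmodulo 5$ and $q_2 \equiv 2 \pmodulo 5$ when $n \equiv 1 \pmodulo 5$) rather than five, and it cites directly the result of Harman, Watt and Wong \cite{HaWaWo04} for primes in short intervals in a fixed residue class modulo $5$, which is precisely the ``analogous bound for primes in a fixed reduced residue class'' that you flag as the key input. Your arithmetic balancing of $7/20$ against the prime-gap exponent $0.525$ matches the paper's exactly: $0.7625 \times 7/20 = 0.266875 < 0.27$.
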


\begin{notation}
  Throughout the paper, the letter $p$, with or without indices, is reserved for prime numbers; $c$ denotes an absolute constant, not necessarily the same in all occurrences. As usual in number theory, $\mu(n)$, $\phi(n)$ and $\tau(n)$ denote, respectively, the M\"obius function, Euler's totient function and the number of divisors function; $\| x \|$ denotes the distance from $x$ to the nearest integer. We write $e(x) = \exp( 2\pi ix )$, $e_q(x) = e(x/q)$, and $(a, b) = \mathrm{gcd}(a, b)$. Also, we use $m \sim M$ and $m \asymp M$ as abbreviations for the conditions $M \le m < 2M$ and $c_1M \le m < c_2M$. Finally, if $z \ge 2$, we define $\Pi(z) = \prod_{p \le z} p$ and introduce the functions
  \begin{align}
    \Phi(n, z) &= \begin{cases}
      1 & \text{if } p \mid n \implies p \ge z, \\
      0 & \text{otherwise};
    \end{cases}  \label{0.10} \\
    \Psi(n, z) &= \begin{cases}
      1 & \text{if } p \mid n \implies p \le z, \\
      0 & \text{otherwise}.
    \end{cases}  \label{0.11}
  \end{align}
\end{notation}

\section{Outline of the method}
\label{s1}

In this section, we outline the proofs of Theorems \ref{thm1} and \ref{thm2}. The details of those proofs are presented in~\S\ref{s4} and \S\ref{s5}. The proof of Theorem \ref{thm3} and its corollary are given in \S\ref{s6}.
 
\subsection{The circle method}
\label{s1.1}

Suppose that $X$ is a large real, and let $L = \log X$, $Y = X^{\theta_1}$, $H = Y^{\theta_2}$, where $\theta_1$ and $\theta_2$ are positive constants to be specified later. Also, let $\mathbf I_1$ and $\mathbf I_2$ be the intervals \eqref{0.6} with $Y = X^{\theta_1}$. For any pair of arithmetic functions $\lambda_1, \lambda_2$, put
\begin{equation}\label{e2.1}
  R(n; \lambda_1, \lambda_2) = \sum_{ \substack{ m_1 + m_2^2 = n\\ m_i \in \mathbf I_i}} \!\!\! \lambda_1(m_1)\lambda_2(m_2).
\end{equation}
In particular, we have $R_2(n) = R(n; \varpi, \varpi)$, where $\varpi$ is the characteristic function of the primes. In the proofs of Theorems \ref{thm1} and \ref{thm2}, we apply the circle method to $R(n; \lambda_1, \lambda_2)$ with different choices of $\lambda_1$ and $\lambda_2$.

The application of the circle method starts with the identity
\begin{equation}\label{e2.2}
  R(n; \lambda_1, \lambda_2) = \int_0^1 S_1(\alpha)S_2(\alpha)e(-\alpha n) \, d\alpha,
\end{equation}
where
\[
  S_j(\alpha) = \sum_{m \in \mathbf I_j} \lambda_j(m)e\big( \alpha m^j \big) \qquad (j = 1, 2).
\]
Suppose that $A > 0$ is a fixed real, which we assume to be larger than some absolute constant. We set
\begin{equation}\label{e2.3}
  P = L^B, \quad Q_0 = YP^{-3}, \quad Q = HP^{-1},
\end{equation}
where $B$ is a parameter to be chosen later in terms of $A$. We define the sets of major and minor arcs as follows:
\begin{equation}\label{e2.4}
  \mathfrak M = \bigcup_{q \le P} \bigcup_{ \substack{ 1 \le a \le q\\ (a, q) = 1}} \bigg[ \frac aq - \frac 1{qQ}, \frac aq + \frac 1{qQ} \bigg], \quad \mathfrak m = \big[ Q^{-1}, 1 + Q^{-1} \big] \setminus \mathfrak M.
\end{equation}
We also write $\mathfrak M(q, a) = \big\{ \alpha \in \mathbb R \; \big| \; |q\alpha - a| < Q^{-1} \big\}$.

In order to proceed further, we need to make some assumptions regarding $\lambda_1$ and $\lambda_2$. We require the following hypotheses:
\begin{enumerate}
  \item [(A$_{j.1}$)] We have $\lambda_j(m) \ll 1$ and $\lambda_j(m) = 0$ when $\Phi(m, P) = 0$. 
  \item [(A$_{1.2}$)] There exists a smooth function $f_1$ such that the inequality
    \[
      \sup_{\mathbf J \subseteq \mathbf I_1} \bigg| \sum_{m \in \mathbf J} \big( \lambda_1(m) - D(\chi)f_1(m) \big)\chi(m) \bigg| \ll YP^{-5}
    \]
    holds for all Dirichlet characters $\chi$ with moduli $q \le P$. Here, the supremum is over all subintervals of\/ $\mathbf I_1$, and $D(\chi) = 1$ or $0$ according as $\chi$ is principal or not. 
  \item [(A$_{2.2}$)] There exists a smooth function $f_2$ such that the inequality
    \[
      \int_{Y/4}^Y \bigg| \sum_{t < m^2 \le t + \delta t} \big( \lambda_2(m) - D(\chi)f_2(m) \big)\chi(m) \bigg|^2 \, dt \ll (qQ)^2 P^{-4}
    \]
    holds for all Dirichlet characters $\chi$ with moduli $q \le P$ and all real $\delta$ with $0 < \delta \ll qQY^{-1}$.
\end{enumerate}

When $\alpha \in \mathfrak M(q, a)$, we define the functions
\[
  S_1^*(\alpha) = \frac {\mu(q)}{\phi(q)} T_1(\alpha - a/q), \quad S_2^*(\alpha) = \frac {S(q, a)}{\phi(q)} T_2(\alpha - a/q),
\]
where $S(q, a)$ is defined in \eqref{0.1} and
\[
  T_j(\beta) = \sum_{m \in \mathbf I_j} f_j(m) e \big( \beta m^j \big) \qquad (j = 1, 2).
\]
Since the intervals $\mathfrak M(q, a)$ are disjoint, this defines $S_j^*(\alpha)$ on $\mathfrak M$. The analysis of the major arcs aims to prove that one can approximate $S_j(\alpha)$ by $S_j^*(\alpha)$ on average over $\alpha \in \mathfrak M$. By Cauchy's inequality,
\begin{equation}\label{e2.5}
  \int_{\mathfrak M} |S_1(S_2 - S_2^*)| \, d\alpha \le PI_1^{1/2} \bigg( \max_{ \substack{ 1 \le a \le q \le P\\ (a, q) = 1}} \int_{\mathfrak M(q, a)} |S_2 - S_2^*|^2 \, d\alpha \bigg)^{1/2},
\end{equation}
where
\begin{equation}\label{e2.6}
  I_1 = \int_0^1 |S_1|^2 \, d\alpha = \sum_{m \in \mathbf I_1} \lambda_1(m)^2 \ll Y.
\end{equation}
Let $\alpha \in \mathfrak M(q, a)$ and note that (A$_{2.1}$) implies that $\lambda_2(m) = 0$ when $(m, q) > 1$. Using the orthogonality of the characters modulo $q$, we obtain
\begin{equation}\label{e2.7}
  | S_2(\alpha) - S_2^*(\alpha)|^2 \le \sum_{\chi \, \modulo q} |W_2(\alpha - a/q, \chi)|^2,
\end{equation}
where
\[
  W_j(\beta, \chi) = \sum_{m \in \mathbf I_j} (\lambda_j(m) - D(\chi)f_j(m)) \chi(m) e\big( \beta m^j \big) \qquad (j = 1, 2).
\]
Inserting \eqref{e2.6} and \eqref{e2.7} into the right side of \eqref{e2.5}, we get
\begin{equation}\label{e2.8}
  \int_{\mathfrak M} |S_1(S_2 - S_2^*)| \, d\alpha \ll PY^{1/2} \bigg( \max_{q \le P} \sum_{\chi \, \modulo q} \int_{-1/(qQ)}^{1/(qQ)} |W_2(\beta, \chi)|^2 \, d\beta \bigg)^{1/2}.
\end{equation}
Combining Gallagher's lemma \cite[Lemma 1]{Gall70}) with a device of Saffari and Vaughan \cite[p. 25]{SaVa77}, we find that
\[
  \int_{-1/(qQ)}^{1/(qQ)} |W_2(\beta, \chi)|^2 \, d\beta \ll \frac 1{(qQ)^2} \int_{Y/4}^Y \bigg| \sum_{t < m^2 \le t + \delta t} \big( \lambda_2(m) - D(\chi)f_2(m) \big)\chi(m) \bigg|^2 dt + \delta,
\]
for some $\delta \asymp (qQ)Y^{-1} \ll HY^{-1}$. Thus, by \eqref{e2.3}, \eqref{e2.8} and hypothesis (A$_{2.2}$) above,
\begin{equation}\label{e2.9}
  \int_{\mathfrak M} |S_1(S_2 - S_2^*)| \, d\alpha \ll (Y/P)^{1/2}.
\end{equation}

Before proceeding further, we make an assumption regarding the smooth functions $f_1$ and $f_2$ appearing in hypotheses (A$_{j.2}$): we suppose that
\begin{equation}\label{e2.10}
  |f_j(m)| \ll 1, \quad |f_j'(m)| \ll (1 + |m|)^{-1} \qquad (j = 1, 2).
\end{equation}
These simple conditions suffice to deduce the bounds
\begin{equation}\label{e2.11}
  T_j(\beta) \ll Y^{1/j}(1 + Y|\beta|)^{-1} \qquad (j = 1, 2).
\end{equation}
Let
\begin{equation}\label{e2.11a}
  \mathfrak M_0 = \bigcup_{q \le P} \bigcup_{ \substack{ 1 \le a \le q\\ (a, q) = 1}} \bigg[ \frac aq - \frac 1{qQ_0}, \frac aq + \frac 1{qQ_0} \bigg], \quad \mathfrak m_0 = \mathfrak M \setminus \mathfrak M_0.
\end{equation}
By \eqref{e2.3}, \eqref{e2.6} and \eqref{e2.11},
\begin{align}\label{e2.12}
  \bigg( \int_{\mathfrak m_0} |S_1S_2^*| \, d\alpha \bigg)^2 &\ll I_1 \sum_{q \le P} \sum_{1 \le a \le q} \frac {|S(q, a)|^2}{\phi(q)^2} \int_{1/(qQ_0)}^{1/2} |T_2(\beta)|^2 \, d\beta \notag\\
  &\ll Y^2 \sum_{q \le P} q^{\eta} \int_{1/(qQ_0)}^{\infty} \frac {d\beta}{(1 + Y|\beta|)^2} \ll YP^{-1 + \eta}. 
\end{align}
Here and through the remainder of this section, $\eta > 0$ is a fixed real that can be taken arbitrarily small.

Now, if $\alpha \in \mathfrak M(q, a) \cap \mathfrak M_0$, we have (similarly to \eqref{e2.7})
\begin{equation}\label{e2.13}
  | S_1(\alpha) - S_1^*(\alpha)| \ll q^{-1/2+\eta} \sum_{\chi \, \modulo q} |W_1(\alpha - a/q, \chi)|.
\end{equation}
Using partial summation, we deduce from \eqref{e2.13} and (A$_{1.2}$) that
\[
  | S_1(\alpha) - S_1^*(\alpha)| \ll q^{1/2+\eta/2} YP^{-10}(1 + Y|\beta|).
\]
From this inequality and \eqref{e2.11}, we obtain
\begin{align}\label{e2.14}
  \int_{\mathfrak M_0} |(S_1 - S_1^*)S_2^*| \, d\alpha &\ll \sum_{q \le P} q^{\eta} \sum_{1 \le a \le q} \int_{-1/(qQ_0)}^{1/(qQ_0)} Y^{3/2}P^{-5} \, d\beta\ll Y^{1/2}P^{-1 + \eta}.
\end{align}

Finally, by \eqref{e2.3} and \eqref{e2.11},
\[
  \int_{1/(qQ_0)}^{1/2} T_1(\beta)T_2(\beta) \, d\beta \ll Y^{1/2}P^{-2},
\]
whence
\begin{align}\label{e2.15}
  \int_{\mathfrak M_0} S_1^*(\alpha)S_2^*(\alpha)e(-\alpha n) \, d\alpha = \mathfrak S_2(n, P)\mathfrak I(n; \lambda_1, \lambda_2) + O \big(Y^{1/2}P^{-1} \big),
\end{align}
where $\mathfrak S_2(n, P)$ is defined in \eqref{0.8} and
\[
  \mathfrak I(n; \lambda_1, \lambda_2) = \int_{-1/2}^{1/2} T_1(\beta)T_2(\beta)e(-\beta n) \, d\beta = \sum_{ \substack{ m_1 + m_2^2 = n\\ m_i \in \mathbf I_i}} f_1(m_1)f_2(m_2).
\]
Combining \eqref{e2.9}, \eqref{e2.12}, \eqref{e2.14} and \eqref{e2.15}, we get
\begin{align}\label{e2.16}
\int_{\mathfrak M} S_1S_2e(-\alpha n) \, d\alpha
&= \int_{\mathfrak M_0} S_1^* S_2^*e(-\alpha n) \, d\alpha + \int_{\mathfrak M} S_1(S_2-S_2^*)e(-\alpha n) \, d\alpha \notag\\
&\quad +\int_{\mathfrak m_0} S_1 S_2^*e(-\alpha n) \, d\alpha + \int_{\mathfrak M_0} (S_1-S_1^*)S_2^*e(-\alpha n) \, d\alpha \notag\\
&= \mathfrak S_2(n, P)\mathfrak I(n; \lambda_1, \lambda_2) + O \big(Y^{1/2}P^{-1/2 + \eta} \big). 
\end{align}

In order to estimate the contribution from the minor arcs, we now make another hypothesis regarding $\lambda_2$:
\begin{enumerate}
  \item [(A$_{2.3}$)] Given any $A > 0$, there exists a $B_0 = B_0(A) > 0$ such that when $B \ge B_0$, the inequality
  \[
    \int_{Y/4}^Y \bigg| \sum_{t < m^2 \le t + H} \lambda_2(m) e \big( \alpha m^2 \big) \bigg|^2 dt \ll H^2L^{-A}
  \]
  holds for all $\alpha \in \mathfrak m$.
\end{enumerate}
Using the well-known bound
\[
  \sum_{X < n \le X + H} e(\alpha n) \ll \min \big( H, \| \alpha \|^{-1} \big),
\]
we obtain
\begin{align}\label{e2.17}
  &\phantom{\ll{}} \sum_{X < n \le X + H} \bigg| \int_{\mathfrak m} S_1(\alpha)S_2(\alpha)e(-\alpha n) \, d\alpha \bigg|^2 \notag\\
  &\ll \int_{\mathfrak m}\int_{\mathfrak m} |S_1(\alpha)S_2(\alpha)S_1(\beta)S_2(\beta)| \big( H, \| \alpha - \beta \|^{-1} \big) \, d\alpha d\beta \notag\\
  &\ll \int_{\mathfrak m}\int_{\mathfrak m} |S_1(\beta)S_2(\alpha)|^2 \big( H, \| \alpha - \beta \|^{-1} \big) \, d\alpha d\beta \notag\\
  &\ll I_1 \max_{\beta \in [0, 1]} \int_{\mathfrak m} |S_2(\alpha)|^2 \big( H, \| \alpha - \beta \|^{-1} \big) \, d\alpha. 
\end{align}
Moreover, a simple subdivision argument yields
\begin{align}\label{e2.18}
  \int_{\mathfrak m} |S_2(\alpha)|^2 \big( H, \| \alpha - \beta \|^{-1} \big) \, d\alpha \ll HL \int_{\mathbf J_{\gamma}} |S_2(\alpha)|^2 \, d\alpha,
\end{align}
for some $\gamma \in [0, 1]$ and $\mathbf J_{\gamma} = \mathfrak m \cap \big[ \gamma - H^{-1}, \gamma + H^{-1} \big]$. Since an interval of length $2H^{-1}$ can intersect at most one major arc, $\mathbf J_{\gamma}$ is either an interval or the union of two intervals. Hence,
\begin{align}\label{e2.19}
  \int_{\mathbf J_{\gamma}} |S_2(\alpha)|^2 \, d\alpha \ll \int_{-1/H}^{1/H} |S_2(\alpha + \beta)|^2 \, d\beta,
\end{align}
for some $\alpha \in \mathfrak m$. By Gallagher's lemma and hypothesis (A$_{2.3}$), the last integral is $O \big( L^{-A - 1} \big)$, which together with \eqref{e2.17}--\eqref{e2.19} gives
\begin{equation}\label{e2.20}
  \sum_{X < n \le X + H} \bigg| \int_{\mathfrak m} S_1(\alpha)S_2(\alpha)e(-\alpha n) \, d\alpha \bigg|^2 \ll HYL^{-A}.
\end{equation}

Combining \eqref{e2.2}, \eqref{e2.4}, \eqref{e2.16} and \eqref{e2.20}, we obtain the following result.

\begin{proposition}\label{p1}
  Let $A > 0$ be fixed and $P = L^B$, with $B \ge B_0(A) > 0$. Suppose that $\lambda_1$ is an arithmetic function satisfying hypotheses $(\mathrm A_{1.1})$ and  $(\mathrm A_{1.2})$, and that $\lambda_2$ is an arithmetic functions satisfying hypotheses $(\mathrm A_{2.1})$--$(\mathrm A_{2.3})$. Furthermore, suppose that the functions $f_1$ and $f_2$ appearing in hypotheses $(\mathrm A_{j.2})$ satisfy \eqref{e2.10}. Then
  \begin{equation}\label{e2.21}
    \sum_{X < n \le X + H} \big| R(n; \lambda_1, \lambda_2) - \mathfrak S_2(n, P)\mathfrak I(n; \lambda_1, \lambda_2) \big|^2 \ll HYL^{-A}.
  \end{equation}
\end{proposition}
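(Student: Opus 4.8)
The strategy is simply to assemble the estimates already established in this section, so the argument is short. I would begin from the identity \eqref{e2.2}. Since the integrand $S_1(\alpha)S_2(\alpha)e(-\alpha n)$ has period $1$, the range of integration may be shifted to $[Q^{-1}, 1 + Q^{-1}]$, and by \eqref{e2.4} this interval is the disjoint union of the major arcs $\mathfrak M$ and the minor arcs $\mathfrak m$. (Here one should check that, for $X$ large, every arc $\mathfrak M(q, a)$ with $q \le P$ lies inside $[Q^{-1}, 1 + Q^{-1}]$ and that distinct such arcs are disjoint; both follow from $Q = HP^{-1} > P^2$, which holds once $X$ is large.) Thus
\[
  R(n; \lambda_1, \lambda_2) = \int_{\mathfrak M} S_1 S_2\, e(-\alpha n)\, d\alpha + \int_{\mathfrak m} S_1 S_2\, e(-\alpha n)\, d\alpha .
\]

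Next I would treat the two integrals separately. For the major arcs, hypotheses $(\mathrm A_{1.1})$, $(\mathrm A_{1.2})$, $(\mathrm A_{2.1})$, $(\mathrm A_{2.2})$ and \eqref{e2.10} are exactly what is needed for \eqref{e2.16}, which gives, uniformly in $n$,
\[
  \int_{\mathfrak M} S_1 S_2\, e(-\alpha n)\, d\alpha = \mathfrak S_2(n, P)\, \mathfrak I(n; \lambda_1, \lambda_2) + O\big( Y^{1/2} P^{-1/2 + \eta} \big).
\]
Consequently $R(n; \lambda_1, \lambda_2) - \mathfrak S_2(n, P)\mathfrak I(n; \lambda_1, \lambda_2)$ differs from $\int_{\mathfrak m} S_1 S_2\, e(-\alpha n)\, d\alpha$ by $O\big(Y^{1/2}P^{-1/2+\eta}\big)$. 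Squaring, applying $|u + v|^2 \le 2|u|^2 + 2|v|^2$, and summing over the $\asymp H$ integers $n \in (X, X + H]$, I obtain
\[
  \sum_{X < n \le X + H} \big| R(n; \lambda_1, \lambda_2) - \mathfrak S_2(n, P)\mathfrak I(n; \lambda_1, \lambda_2) \big|^2 \ll H Y P^{-1 + 2\eta} + \sum_{X < n \le X + H} \bigg| \int_{\mathfrak m} S_1 S_2\, e(-\alpha n)\, d\alpha \bigg|^2 .
\]

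It remains to bound the two terms on the right by $HYL^{-A}$. For the first, I would fix $\eta$ small (say $\eta = 1/4$) and then choose $B = B(A)$ large enough that $B(1 - 2\eta) \ge A$; since $P = L^B$ this makes $P^{-1 + 2\eta} \ll L^{-A}$, so the first term is $\ll HYL^{-A}$. For the second, hypothesis $(\mathrm A_{2.3})$ (together with the assumption $B \ge B_0(A)$) is precisely the input that yields the minor-arc mean value \eqref{e2.20}, and \eqref{e2.20} bounds the second term by $O(HYL^{-A})$. Adding the two contributions gives \eqref{e2.21}, completing the proof.

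For this proposition there is essentially no obstacle to overcome: the two substantive ingredients — the major-arc asymptotic \eqref{e2.16}, whose derivation occupies most of this section, and the minor-arc estimate \eqref{e2.20}, which rests on $(\mathrm A_{2.3})$ via Gallagher's lemma — have already been proved for an arbitrary admissible pair $\lambda_1, \lambda_2$. The only matters requiring a little care are the periodicity shift of the integration interval (so that the arc near $0 \equiv 1$ is counted exactly once, consistently with the range $1 \le a \le q$ in \eqref{e2.4}), the disjointness of the major arcs, and the bookkeeping of the dependence of $B$ on $A$. The real difficulty of the paper lies not here but in the verification of hypotheses $(\mathrm A_{j.k})$ for the specific functions $\lambda_1, \lambda_2$ chosen in the proofs of Theorems \ref{thm1} and \ref{thm2}.
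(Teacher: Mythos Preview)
Your proposal is correct and follows exactly the paper's own approach: the proposition is simply the combination of \eqref{e2.2}, \eqref{e2.4}, \eqref{e2.16} and \eqref{e2.20}, as the paper states immediately before the proposition. You have merely made explicit the elementary bookkeeping (the periodicity shift, the disjointness of the major arcs, the choice of $B$) that the paper leaves implicit.
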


\subsection{The sieve method}
\label{s1.2}

One can use Proposition \ref{p1} with $\lambda_1 = \lambda_2 = \varpi$, the characteristic function of the primes, to obtain an asymptotic formula for $R_2(n)$ for almost all $n \in \mathcal H_2 \cap (X, X + H]$ (that is, for all but $O\big( HL^{-A} \big)$ such $n$). However, when one tries to verify the hypotheses of the proposition, one is forced to choose $\theta_1 > \frac 7{12}$ and $\theta_2 > \frac 34$, and so one recovers the result of Zhan and the second author mentioned in the Introduction. Thus, in the proofs of the theorems, we use different choices for $\lambda_1$ and $\lambda_2$.

First, let
\begin{equation}\label{e2.22}
  \lambda_0(m) = \Phi(m, z_0), \quad z_0 = Y^{1/4}P^{-2}.
\end{equation}
We note that
\begin{equation}\label{e2.23}
  R_2(n) = R(n; \varpi, \lambda_0) - R_0(n),
\end{equation}
where $R_0(n)$ is the number of solutions of the equation
\begin{equation*}
  p_1 + (p_2p_3)^2 = n
\end{equation*}
in primes $p_1, p_2, p_3$ subject to
\begin{equation*}\label{1.5}
  p_1 \in \mathbf I_1, \quad z_0 < p_2 \le Y^{1/4}, \quad p_2 \le p_3, \quad p_2p_3 \in \mathbf I_2.
\end{equation*}
It turns out that Proposition \ref{p1} can be applied to $R(n; \varpi, \lambda_0)$ when $\theta_1 > \frac 7{12}$ and $\theta_2 > \frac 23$. This yields the asymptotic formula
\[
  R(n; \varpi, \lambda_0) = \mathfrak S_2(n, P)\mathfrak I(n) + O \big( Y^{1/2}L^{-A} \big)
\]
for almost all $n \in \mathcal H_2 \cap (X, X + H]$. Here, we have $\mathfrak I(n) \sim r_2(n)$, so this asymptotic formula is very close to the conjectured asymptotic formula for $R_2(n)$. In order to complete the proof of Theorem \ref{thm1}, we shall use an upper-bound sieve to show that
\begin{equation}\label{e2.24}
  R_0(n) \ll \mathfrak S_2(n, P)r_2(n)L^{-1+\delta}
\end{equation}
for almost all $n \in \mathcal H_2 \cap (X, X + H]$.

We now proceed to outline the proof of Theorem \ref{thm2}. We introduce two pairs of arithmetic functions: $\lambda_1^{\pm}$ such that
\begin{equation}\label{e2.25}
  \lambda_1^-(m) \le \varpi(m) \le \lambda_1^+(m) \qquad (m \in \mathbf I_1),
\end{equation}
and $\lambda_2^{\pm}$ such that
\begin{equation}\label{e2.26}
  \lambda_2^-(m) \le \lambda_0(m) \le \lambda_2^+(m) \qquad (m \in \mathbf I_2).
\end{equation}
Then
\begin{equation}\label{e2.27}
  R(n; \varpi, \lambda_0) \ge R(n; \lambda_1^+, \lambda_2^-) + R(n; \lambda_1^-, \lambda_2^+) - R(n; \lambda_1^+, \lambda_2^+).
\end{equation}
We remark that this inequality is a variant of the vector sieve of Br\"udern and Fouvry \cite{BrFo94}. We shall use Harman's sieve to construct the functions $\lambda_i^{\pm}$ so that Proposition \ref{p1} can be applied to each of the three terms on the right side of \eqref{e2.27}. It will then follow from \eqref{e2.27} that
\begin{equation}\label{e2.28}
  R(n; \varpi, \lambda_0) \ge ( \sigma(\theta_1, \theta_2) + o(1) ) r_2(n)\mathfrak S_2(n, P)
\end{equation}
for almost all $n \in \mathcal H_2 \cap (X, X + H]$. Here, $\sigma(\theta_1, \theta_2)$ is independent of any parameters other than the exponents
$\theta_1$ and $\theta_2$. Moreover, as a function of $\theta_1$ and $\theta_2$, $\sigma$ is continuous and non-decreasing with respect to each variable. Since $\sigma(0.55 + \eps, 0.6) \ge 0.17$, Theorem \ref{thm2} follows readily from \eqref{e2.23}, \eqref{e2.24} and \eqref{e2.28}.

\section{Lemmas}
\label{s2}

In this section, we collect various auxiliary results required in the proofs of the theorems. These lemmas fall in three major categories: bounds for exponential sums; results from elementary number theory and sieve theory; and results concerning the singular series.

\subsection{Bounds for exponential sums}
\label{s2.1}

The first two lemmas are essentially restatements of Lemmas~3.1 and~3.2 in \cite{LiZh97}. We omit the proofs, since they are identical to the proofs in \cite{LiZh97}.

\begin{lemma}\label{l1}
  Let $A > 0$, $B > 0$, and $x^{1/2} \le y \le x$, with $x$ sufficiently large. Suppose that $\alpha \in \mathbb R$ and $a, q \in \mathbb Z$ are such that
  \begin{equation}\label{2.1}
    L^B \le q \le yL^{-B}, \quad (a, q) = 1, \quad |\alpha - a/q| < q^{-2},
  \end{equation}
  where $L = \log x$. Suppose also that $(a_m)$ is a sequence of complex numbers with $|a_m| \le \tau(m)^c$, and that
  \begin{equation*}\label{2.2}
    1 \le M \le x^{1/4}L^{-B}.
  \end{equation*}
  Then, for $B \ge B_0(A) > 0$, one has
  \begin{equation*}\label{2.3}
    \int_x^{2x} \bigg| \mathop{ \sum_{m \sim M} \sum_k }_{t < m^2k^2 \le t + y} a_m e \big( \alpha m^2k^2 \big) \bigg|^2 dt \ll y^2L^{-A}.
  \end{equation*}
\end{lemma}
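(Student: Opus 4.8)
\textbf{Proof plan for Lemma \ref{l1}.}

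The plan is to establish this mean-value estimate for the exponential sum over the bilinear-type set $\{m^2k^2 : m \sim M, \ t < m^2k^2 \le t+y\}$ by reducing the $L^2$-integral over $t$ to a bound for a one-dimensional exponential sum with a Farey-type rational approximation, using the major-arc/minor-arc dichotomy implicit in \eqref{2.1}. First I would open the square in the integral over $t \in [x, 2x]$, so that after integrating in $t$ the problem becomes estimating a sum of the form $\sum \sum a_{m_1}\overline{a_{m_2}} e\big(\alpha(m_1^2 k_1^2 - m_2^2 k_2^2)\big)$ weighted by the length of the overlap of the two dyadic ranges; a standard device (Gallagher's lemma, or direct Cauchy--Schwarz on the $t$-integral) converts the $L^2$-norm over $t$ of a short exponential sum into a smoothed square of the full sum $\sum_{m\sim M}\sum_k a_m e(\alpha m^2k^2)$, where $k$ ranges over an interval determined by $m$ and $x$. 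The key point is that $n = m^2 k^2 = (mk)^2$ runs over \emph{squares}, so writing $\ell = mk$ one is really looking at a sum $\sum_{\ell} c_\ell e(\alpha \ell^2)$ with coefficients $c_\ell = \sum_{m \mid \ell, \ m\sim M} a_m$ supported on $\ell \asymp M\sqrt{x}/M$, i.e.\ $\ell \asymp x^{1/2}$, and $|c_\ell| \le \tau(\ell)^{c+1}$.

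Next I would treat this as a Weyl sum over squares with divisor-bounded coefficients. The condition \eqref{2.1}, namely $L^B \le q \le y L^{-B}$ with $|\alpha - a/q| < q^{-2}$, puts $\alpha$ on what is effectively a minor arc relative to the modulus threshold $L^B$. The standard approach is a Weyl-differencing / Cauchy--Schwarz step in the variable $m$ (or $k$), which replaces the quadratic phase $\alpha m^2 k^2$ by a linear phase in one of the variables after differencing, producing an inner sum $\sum_k \min\big(K, \|2\alpha m^2 h + \cdots\|^{-1}\big)$ that is controlled by the distribution of $\alpha m^2 h$ modulo $1$. Summing this over the remaining variables and using the rational approximation $|\alpha - a/q| < q^{-2}$ together with the range $L^B \le q \le yL^{-B}$, one obtains a power-of-$\log$ saving: the main terms (from $h$ or $m$ congruent to favourable residues) contribute $\ll y^2 q^{-1} + y^2 L^B/y + \cdots$, and both the ``$q$ large'' and ``$q$ small'' extremes are dominated by $y^2 L^{-A}$ once $B \ge B_0(A)$ is chosen large enough. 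The restriction $M \le x^{1/4}L^{-B}$ is exactly what is needed to ensure that after differencing in $m$ the resulting modulus in the inner sum stays below $y$ (so that the Farey approximation for $\alpha$ can still be exploited) and that the diagonal terms $m_1 = m_2$, $k_1 = k_2$ contribute only $\ll y^2 L^{-A}$; it also guarantees $K = \sqrt{x}/M \gg x^{1/4}$, so the $k$-sum is long enough for the differencing to be efficient.

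The main obstacle, and the step requiring the most care, is the bilinear/divisor structure: the coefficients $a_m$ are attached to the variable $m$ but the phase depends on the product $mk$, so one cannot simply quote a clean Weyl inequality for $\sum e(\alpha \ell^2)$. One must run the Cauchy--Schwarz and differencing in a way that keeps the $a_m$ on the outside (applying Cauchy--Schwarz in $m$ to move $|a_m|^2 \le \tau(m)^{2c}$ out, at the cost of one factor $\sum_m \tau(m)^{2c} \ll M L^{O(1)}$), and then the inner double sum over $k_1, k_2$ is coefficient-free and amenable to the classical treatment of $\sum_k e(\alpha m^2(k_1^2 - k_2^2))$ via the divisor bound and the estimate $\sum_{r \le R}\min(R, \|\alpha r\|^{-1}) \ll (R + q)(1 + R/q)L$. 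Tracking the $\log$-powers through these manipulations — one loses a bounded power of $L$ from the $t$-integration smoothing, another from the divisor sums, another from the $\min$-sum estimate — is what fixes the dependence $B_0(A)$; since the underlying minor-arc bound genuinely saves an arbitrary power of $L$ (the saving being $\asymp (q^{-1} + qy^{-1})L^{O(1)} \le L^{-B + O(1)}$ in the relevant range), no new idea beyond careful bookkeeping is needed, which is presumably why the authors state that the proof is ``identical'' to the one in \cite{LiZh97} and omit it.
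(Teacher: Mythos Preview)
The paper does not give a proof of this lemma; it simply refers to Liu--Zhan \cite{LiZh97}, so there is no ``paper's own proof'' to compare against. That said, your sketch contains a genuine gap that would prevent it from working over the full range $M \le x^{1/4}L^{-B}$.

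The problem is the Cauchy--Schwarz step in $m$. Writing $W_m(t)=\sum_{t<m^2k^2\le t+y}e(\alpha m^2k^2)$, your bound is
\[
  \int_x^{2x}|S(t)|^2\,dt \le \bigg(\sum_{m\sim M}|a_m|^2\bigg)\sum_{m\sim M}\int_x^{2x}|W_m(t)|^2\,dt.
\]
The first factor is $\asymp M$ (take $a_m\equiv 1$). In the second factor, the $k_1=k_2$ diagonal inside each $\int|W_m|^2$ already contributes $\asymp y\cdot\#\{k:m^2k^2\in[x,2x]\}\asymp y\sqrt{x}/m$, and this is \emph{positive}, so no later cancellation can remove it. Summing over $m\sim M$ gives $\sum_m y\sqrt{x}/m\asymp y\sqrt{x}$, and the Cauchy--Schwarz upper bound is therefore $\gg M\,y\sqrt{x}$. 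For this to be $\ll y^2L^{-A}$ one needs $M\ll yx^{-1/2}L^{-A}$, which is precisely the Type~II range of Lemma~\ref{l2}, \emph{not} the Type~I range $M\le x^{1/4}L^{-B}$ claimed here. In the paper's applications (e.g.\ $y\asymp x^{2/3}$, $M$ up to $x^{1/4}$) one has $My\sqrt{x}\asymp x^{17/12}\gg x^{4/3}=y^2$, so your bound is worse than trivial.

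The argument in \cite{LiZh97} avoids this loss by \emph{not} applying Cauchy--Schwarz in $m$. One expands the square directly over all four variables $m_1,m_2,k_1,k_2$, sets $h=m_1k_1-m_2k_2$ (so $|h|\ll y/\sqrt{x}$), and for fixed $m_1,m_2,h$ sums the resulting \emph{linear} exponential in the free $k$-variable along the arithmetic progression it defines. This produces $\min\big(\sqrt{x}/[m_1,m_2],\,\|2\alpha h[m_1,m_2]\|^{-1}\big)$, and after reparametrising $r=2h'[m_1,m_2](m_1,m_2)=2h'm_1m_2$ one has $|r|\ll M^2\cdot y/\sqrt{x}\le y$ precisely because $M\le x^{1/4}$; this is the true role of that hypothesis. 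The standard $\sum_r\tau(r)^c\min(\cdot,\|\alpha r\|^{-1})$ estimate with $L^B\le q\le yL^{-B}$ then gives the required saving. The diagonal $h=0$ in this direct expansion contributes only $\asymp y\sqrt{x}L^c$, which (unlike $My\sqrt{x}$) is $\ll y^2L^{-A}$ throughout the range relevant to the paper.
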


\begin{lemma}\label{l2}
  Let $A > 0$, $B > 0$, and $x^{1/2} \le y \le x$, with $x$ sufficiently large. Suppose that $\alpha \in \mathbb R$ and $a, q \in \mathbb Z$ satisfy \eqref{2.1}. Suppose also that $(a_m)$ and $(b_k)$ are sequences of complex numbers with $|a_m| \le \tau(m)^c$ and $|b_k| \le \tau(k)^c$, and that
  \begin{equation*}\label{2.4}
    L^B \le M \le yx^{-1/2}L^{-B}.
  \end{equation*}
  Then, for $B \ge B_0(A) > 0$, one has
  \begin{equation*}\label{2.5}
    \int_x^{2x} \bigg| \mathop{ \sum_{m \sim M} \sum_k }_{t < m^2k^2 \le t + y} a_mb_k e \big( \alpha m^2k^2 \big) \bigg|^2 dt \ll y^2L^{-A}.
  \end{equation*}
\end{lemma}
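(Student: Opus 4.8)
The plan is to follow the standard Vinogradov-type treatment of a bilinear (type II) exponential sum, exactly as in \cite{LiZh97}, reducing the mean-square over $t$ to a pointwise bound after removing the $t$-localisation. First I would apply Gallagher's lemma \cite[Lemma 1]{Gall70} (or rather the Saffari--Vaughan device already used in \S\ref{s1}) to pass from the integral over $t \in [x, 2x]$ of the short sum $t < m^2k^2 \le t+y$ to an integral over a short interval in the dual variable of the full sum $\sum_{m \sim M}\sum_k a_m b_k e(\alpha m^2 k^2)e(\beta m^2 k^2)$, with $|\beta| \le y^{-1}$. Thus it suffices to bound, uniformly in $\beta$ with $|\beta| \ll y^{-1}$,
\[
  \sum_{m \sim M} \Bigl| \sum_{k \asymp (x/M^2)^{1/2}} b_k e\bigl( (\alpha+\beta) m^2 k^2 \bigr) \Bigr|,
\]
since the $a_m$ contribute only a factor $\tau(m)^c$ which is harmless after Cauchy--Schwarz and a divisor-moment estimate. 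Writing $K = (x/M^2)^{1/2}$ for the typical size of $k$, note the hypothesis $L^B \le M \le yx^{-1/2}L^{-B}$ forces $K$ to lie in a range where $K^2$ is comfortably larger than a power of $L$ and comfortably smaller than $y$, which is what makes the argument work.

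The next step is the heart of the matter: estimate the inner sum over $k$ by a Weyl-type differencing in $k$ (one application of van der Corput's inequality, or Weyl's inequality for the quadratic $k \mapsto (\alpha+\beta)m^2k^2$), which converts $|\sum_k \cdots|^2$ into a sum over a difference variable $h$ of linear exponential sums $\sum_k e(2(\alpha+\beta)m^2 h k)$, each bounded by $\min(K, \|2(\alpha+\beta)m^2 h\|^{-1})$. Summing over $m \sim M$ and $h$, and invoking the Diophantine hypothesis \eqref{2.1} — that $\alpha$ has a rational approximation $a/q$ with $L^B \le q \le yL^{-B}$ and $|\alpha - a/q| < q^{-2}$ — one counts the number of $(m,h)$ for which $\|2\alpha m^2 h\|$ is small; the key point is that as $m$ ranges over $\sim M$ values and $h$ over $\ll K$ values, the products $m^2 h$ do not cluster too badly modulo $q$, so the standard divisor-bound lemma for $\sum \min(U, \|\cdots\|^{-1})$ applies. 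One keeps track of the fact that $|\beta| \le y^{-1}$ is small enough that $\alpha+\beta$ obeys essentially the same rational approximation as $\alpha$ (possibly after shrinking $q$), so the $\beta$-shift is absorbed. Here the upper bound $M \le yx^{-1/2}L^{-B}$ is used to ensure the "diagonal" contribution $h = 0$ and the error terms from the differencing stay below $y^2 L^{-A}$, while the lower bound $M \ge L^B$ and $q \ge L^B$ provide the saving $L^{-A}$.

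The main obstacle I expect is the bookkeeping in the exponential-sum estimate: one must show that the rational approximation $a/q$ to $\alpha$, which is given for $\alpha$ itself, transfers to the perturbed coefficient $(\alpha+\beta)$ and survives the differencing, so that the count of $m^2 h$ in a residue class modulo $q$ — i.e. a bound of the shape $\sum_{m\sim M}\sum_{h \le K}\min(\text{something},\|2(\alpha+\beta)m^2h\|^{-1}) \ll (\text{main terms}) L^{O(1)}$ — genuinely wins by a factor $L^{-A}$ for $B$ large in terms of $A$. The rest — the reduction via Gallagher's lemma, the Cauchy--Schwarz to peel off $a_m$, and the divisor-moment bounds — is entirely routine and, as the authors note, identical to \cite[Lemma 3.2]{LiZh97}, so I would simply cite that source and omit the details.
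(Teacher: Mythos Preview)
The paper's own treatment is exactly your closing sentence: it states that Lemmas~\ref{l1} and~\ref{l2} are restatements of \cite[Lemmas~3.1, 3.2]{LiZh97} and omits the proofs entirely. So on that level you match.

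The sketch you give on the way there, however, is the outline for a \emph{type~I} estimate (Lemma~\ref{l1}) rather than the type~II estimate at hand. The crucial difference is the presence of the arbitrary weights $b_k$. After Cauchy--Schwarz has removed the $a_m$, the inner sum is still $\sum_k b_k\,e\bigl((\alpha+\beta)m^2k^2\bigr)$; one application of van der Corput differencing in $k$ produces
\[
  \sum_k b_{k+h}\bar b_k\,e\bigl(2(\alpha+\beta)m^2hk+\cdots\bigr),
\]
\emph{not} the coefficient-free linear sum $\sum_k e\bigl(2(\alpha+\beta)m^2hk\bigr)$ you write, and so the bound $\min(K,\|2(\alpha+\beta)m^2h\|^{-1})$ is unavailable. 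In the genuine type~II argument one opens the square in $k$ and then, for each fixed pair $(k_1,k_2)$, exploits the sum over $m$ --- which \emph{is} now coefficient-free thanks to the Cauchy--Schwarz step, namely $\sum_{m\sim M} e\bigl(\alpha(k_1^2-k_2^2)m^2\bigr)$. It is this Weyl sum in the freed variable $m$, together with the Diophantine hypothesis~\eqref{2.1} and the lower bound $M\ge L^B$, that supplies the saving $L^{-A}$; the diagonal $k_1=k_2$ is what forces the upper bound $M\le yx^{-1/2}L^{-B}$. (A smaller point: Gallagher's lemma passes from a Fourier integral to a $t$-average, not the reverse; in \cite{LiZh97} the $t$-integral is handled by squaring out directly.)
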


The next lemma is a simple tool for reducing the estimation of a bilinear sum to the estimation of a similar sum subject to `nicer' summation conditions. The proof is a standard application of Perron's integral formula, so we omit it and refer the reader to Kumchev \cite[Lemma 2.7]{Kumc06a}.

\begin{lemma}\label{lA}
  Let $F: \mathbb N \to \mathbb C$ satisfy $|F(x)| \le X$, let $M, K \ge 2$, and define the bilinear form
  \[
    \mathcal B(M, K) = \mathop{\sum_{m \sim M} \sum_{k \sim K}}_{m < k} a_m b_k F(mk),
  \]
  where $|a_m| \le 1$, $|b_k| \le 1$. Then
  \[
    \mathcal B(M, K) \ll L \bigg| \sum_{m \sim M} \sum_{k \sim K} a_m' b_k' F(mk) \bigg| + (XMK)^{-1},
  \]
  where $|a_m'| \le |a_m|$, $|b_k'| \le |b_k|$ and $L = \log(2MKX)$. The same estimate holds, if we replace the summation condition $m < k$ in the definition of $\mathcal B(M, K)$ with $U \le mk < U'$.
\end{lemma}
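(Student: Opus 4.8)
The plan is to realise the summation condition through a truncated Perron integral, which separates the variables $m$ and $k$ at the cost of a single factor $L$; the argument is entirely standard, so I will only indicate its shape. First I would dispose of a trivial case: if the intervals $[M,2M)$ and $[K,2K)$ are disjoint, then the condition $m<k$ holds for every admissible pair or for none, and the assertion is immediate, with $a_m'=a_m$ and $b_k'=b_k$ (or all primed coefficients $0$). So I may assume these intervals overlap, in particular $M\asymp K$.

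Since $m,k\in\mathbb N$, we have $m<k$ if and only if $y:=k/(m+\sfrac12)>1$, and $y\ne1$ always. I would take $c=1/L$ and let $T$ be a large fixed power of $XMK$, say $T=(XMK)^{4}$; then the classical truncated Perron formula gives
\[
\mathbf 1\{m<k\}=\frac1{2\pi i}\int_{c-iT}^{c+iT}y^{s}\,\frac{ds}{s}+O\!\bigg(\frac{y^{c}}{T|\log y|}\bigg).
\]
Substituting the main term into $\mathcal B(M,K)$ and interchanging the finite double sum with the integral, I obtain
\[
\frac1{2\pi i}\int_{c-iT}^{c+iT}\Bigg(\sum_{m\sim M}\sum_{k\sim K}\big(a_m(m+\sfrac12)^{-s}\big)\big(b_kk^{s}\big)F(mk)\Bigg)\frac{ds}{s}.
\]
For each $s$ on the contour the inner sum is again of the form $\sum_m\sum_k a_m'b_k'F(mk)$, now with $a_m'=a_m(m+\sfrac12)^{-s}$ and $b_k'=b_kk^{s}$; bounding it by its maximum over $s=c+it$, $|t|\le T$, and using $\int_{-T}^{T}|c+it|^{-1}\,dt\ll\log(T/c)\ll L$, I get a bound $\ll L\,\big|\sum_m\sum_k a_m'b_k'F(mk)\big|$ for one particular $s$ with $\Re s=c$. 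For that $s$ one has $|a_m'|=|a_m|(m+\sfrac12)^{-c}\le|a_m|$ and $|b_k'|=|b_k|k^{c}\le e|b_k|$ (because $k\le2K$ and $c=1/L\le1/\log(2K)$), and rescaling $b_k'$ by $e^{-1}$ absorbs the constant into the implied constant, leaving coefficients of exactly the required size.

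It remains to control the contribution of the Perron error, which is at most $X\sum_{m\sim M}\sum_{k\sim K}y^{c}/(T|\log y|)$. This is the only point that needs genuine care, since one cannot simply use $|F|\le X$ term by term. The key observations are that $|k-m-\sfrac12|\ge\sfrac12$ (so, with $T$ a large power of $XMK$, $T|\log y|\gg1$ for every pair, while $y^{c}\ll1$), and that for each fixed $k$ the numbers $|\log(k/(m+\sfrac12))|$ are spaced $\gg1/M$ apart; a routine estimate then gives $\sum_{m\sim M}y^{c}/(T|\log y|)\ll(M/T)\log M$, hence a total error $\ll X(MK)\log(MK)/T\ll(XMK)^{-1}$, which is admissible. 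Finally, the variant with $U\le mk<U'$ in place of $m<k$ is treated identically: I would write this condition as the difference of $mk\ge U$ and $mk\ge U'$, perturb $U$ and $U'$ to half-integers (which changes nothing, since $mk\in\mathbb N$), apply the Perron formula with $y=mk/U$ and $y=mk/U'$, and handle the near-diagonal terms of the error via the elementary bound $\#\{m\sim M,\,k\sim K:\ |mk-U|\le w\}\ll M+w$.
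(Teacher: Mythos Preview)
Your proposal is correct and is precisely the ``standard application of Perron's integral formula'' that the paper invokes; indeed the paper omits the proof entirely and refers to Kumchev~\cite[Lemma~2.7]{Kumc06a} for the details, which are essentially the ones you have written out. Your treatment of the error term via the spacing of $|\log y|$ and the choice $T=(XMK)^4$ is the standard argument, and your rescaling to enforce $|b_k'|\le|b_k|$ is the right way to handle the harmless factor $k^c\le e$.
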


\begin{lemma}\label{l3}
  Let $A > 0$, $B > 0$, and $x^{1/2} \le y \le x$, with $x$ sufficiently large. Suppose that $\alpha \in \mathbb R$ and $a, q \in \mathbb Z$ satisfy \eqref{2.1}. Suppose also that $(a_m)$ is a sequence of complex numbers with $|a_m| \le \tau(m)^c$, and that
  \begin{equation*}\label{2.6}
    1 \le M \le x^{1/4}L^{-2B}, \quad 2 \le z \le yx^{-1/2}L^{-2B}.
  \end{equation*}
  Then, for $B \ge B_0(A) > 0$, one has
  \begin{equation*}\label{2.7}
    \int_x^{2x} \bigg| \mathop{ \sum_{m \sim M} \sum_k }_{t < m^2k^2 \le t + y} a_m \Phi(k, z) e \big( \alpha m^2k^2 \big) \bigg|^2 dt \ll y^2L^{-A},
  \end{equation*}
  where $\Phi(k, z)$ is the function defined in \eqref{0.10}.
\end{lemma}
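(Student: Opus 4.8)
The plan is to reduce the estimate to Lemmas~\ref{l1} and~\ref{l2} by means of a sieve decomposition of $\Phi(k,z)$. Starting from the Legendre identity $\Phi(k,z)=\sum_{d\mid(k,\Pi(z))}\mu(d)$, I split the sum over $d$ at the point $L^B$, writing $\Phi(k,z)=\Phi_1(k)+\Phi_2(k)$, where $\Phi_1$ collects the terms with $d<L^B$ and $\Phi_2$ those with $d\ge L^B$. In the contribution of $\Phi_1$ I substitute $k=dk'$ and amalgamate $m$ and $d$ into a single variable $u=md$; since $d<L^B$ and $m\le M\le x^{1/4}L^{-2B}$ we have $u<x^{1/4}L^{-B}$, and the new coefficient $c_u=\sum_{md=u}a_m\mu(d)$ satisfies $|c_u|\le\tau(u)^c$. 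After a dyadic subdivision in $u$, Lemma~\ref{l1} bounds each of the $O(L)$ resulting pieces by $y^2L^{-A'}$ with $A'$ as large as I please.

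For the contribution of $\Phi_2$ --- the part of $\Phi$ carried by divisors $d\ge L^B$ --- I exploit that such a $d$ is squarefree with every prime factor at most $z$. Multiplying the prime factors of $d$ in decreasing order until the partial product first reaches $L^B$ produces a divisor $r$ of $d$ with $r\in[L^B,L^Bz)$; and here the hypothesis $z\le yx^{-1/2}L^{-2B}$ gives $L^Bz\le yx^{-1/2}L^{-B}$, so $r$ lies exactly in the window $[L^B,yx^{-1/2}L^{-B}]$ in which Lemma~\ref{l2} operates. Writing $d=rs$ (so that every prime of $s$ is smaller than every prime of $r$) and $k=rsk'$, the $\Phi_2$-part of the sum in the lemma becomes a combination of bilinear forms $\sum_r\mu(r)\sum_w b_w\,e(\alpha r^2w^2)$, where $w=msk'$ and $b_w$ is assembled from $a_m$ and $\mu(s)$, so that $|b_w|\le\tau(w)^c$. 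Taking $r$ as the short variable and $w$ as the long one, this is precisely of the shape handled by Lemma~\ref{l2}, and a further dyadic subdivision yields the required bound. The two hypotheses $M\le x^{1/4}L^{-2B}$ and $z\le yx^{-1/2}L^{-2B}$ enter here exactly to align the Type~I range $u<x^{1/4}L^{-B}$ with the Type~II range $r<yx^{-1/2}L^{-B}$ across the cutoff $L^B$; Lemma~\ref{lA} is available to absorb residual size conditions of the form $U\le mk<U'$ that arise when the variables are separated.

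The step I expect to be the main obstacle is the decoupling of the constraint linking $r$ and $s$ --- essentially that every prime of $s$ lies below the least prime of $r$, together with the minimality defining $r$ --- for without this, $b_w$ still depends on $r$ and Lemma~\ref{l2} does not literally apply. I would handle it by partitioning the ranges of the least prime factor of $r$ and the greatest prime factor of $s$ into $O(L)$ dyadic blocks apiece: when the two blocks are distinct the constraint becomes automatic and $b_w$ is then a genuine function of $w$ alone, so Lemma~\ref{l2} applies at once; the remaining ``diagonal'' block pairs must be treated separately, using that these two extreme primes can never coincide for a squarefree product and that the minimality of $r$ forces it to be essentially a single prime in that regime. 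The bookkeeping must be organized so that only $O(L^{O(1)})$ pieces occur in total, since each piece is afforded a saving of $L^{A'}$ from Lemmas~\ref{l1} and~\ref{l2} with $A'$ chosen large enough in terms of $A$.
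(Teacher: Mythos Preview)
Your approach is essentially the paper's. The paper also opens $\Phi(k,z)$ via the Legendre identity, places $d$ in dyadic ranges $d\sim D$, and distinguishes three cases: (i) $D\le L^B$, where $r=md$ is merged and Lemma~\ref{l1} applies; (ii) $L^B\le D\le yx^{-1/2}L^{-B}$, where $d$ itself serves as the short variable in Lemma~\ref{l2}; (iii) $D\ge yx^{-1/2}L^{-B}$, where one extracts from $d$ a sub-product lying in the Type~II window. Your two-case split is just a mild reorganization of this scheme: your $\Phi_1$ is case~(i), and your $\Phi_2$ subsumes both (ii) and (iii) by always extracting a factor $r\in[L^B,L^Bz)\subseteq[L^B,yx^{-1/2}L^{-B})$.

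The one point you should revise is the decoupling of the cross-constraint between $r$ and $s$. The paper does not use your dyadic blocking of the two boundary primes; instead it isolates them explicitly, writing $d=d_1\,p\,p'\,d_2$ with $p$ the least prime of the ``top'' part and $p'$ the greatest prime of the ``bottom'' part, so that $v=d_1p$ is the short variable and $u=mkp'd_2$ the long one, and then applies Lemma~\ref{lA} directly to remove the three conditions $p'<p$, $pp'd_1d_2\sim D$, and $d_1pp'\ge V$. After this the residual constraints ($d_2\mid\Pi(p')$, and all primes of $d_1$ exceeding $p$) link only variables that are merged into the same output, so the coefficients factor cleanly. Your diagonal-block argument as stated is not correct: the minimality condition $r/p_{\min}(r)<L^B$ does not force $r$ to be ``essentially a single prime'', since the product of the larger prime factors of $r$ may be any squarefree integer below $L^B$. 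The clean fix is exactly what the paper does --- peel off $p=p_{\min}(r)$ and $p'=p_{\max}(s)$ as explicit summation variables and separate $p'<p$ via Lemma~\ref{lA}, rather than via dyadic blocks.
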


\begin{proof}
  Let $g_t$ denote the indicator function of the interval $\big( t^{1/2}, (t + y)^{1/2} \big]$. We have
  \[
    \sum_{m \sim M} \sum_k a_m \Phi(k, z) g_t(mk) e \big( \alpha m^2k^2 \big) = \sum_{d \mid \Pi(z)} \sum_{m \sim M} \sum_k a_m \mu(d) g_t(mkd) e \big( \alpha m^2k^2d^2 \big).
  \]
  It thus suffices to show that
  \begin{equation}\label{2.8}
    \int_x^{2x} \bigg| \sum_{ \substack{ d \mid \Pi(z)\\ d \sim D}} \sum_{m \sim M} \sum_k a_m \mu(d) g_t(mkd) e \big( \alpha m^2k^2d^2 \big) \bigg|^2 dt \ll y^2L^{-A},
  \end{equation}
  where $1 \le D \ll x^{1/2}M^{-1}$. We distinguish three cases depending on the size of $D$.

  \paragraph*{\em Case 1:} $D \le L^B$. Upon defining the convolution
  \[
    b_r = \sum_{ \substack{ dm = r\\ d \sim D, m \sim M\\ d \mid \Pi(z)}} a_m \mu(d),
  \]
  we can rewrite the left side of \eqref{2.8} as
  \[
    \int_x^{2x} \bigg| \sum_{ r \sim R} \sum_k b_r g_t(rk) e \big( \alpha r^2k^2 \big) \bigg|^2 dt,
  \]
  where $|b_r| \le \tau(r)^c$ and $R = MD \le x^{1/4}L^{-B}$. Therefore, \eqref{2.8} follows from Lemma \ref{l1}.

  \paragraph*{\em Case 2:} $L^B \le D \le yx^{-1/2}L^{-B}$. Upon defining the convolution
  \[
    b_r = \sum_{ \substack{ mk = r\\ m \sim M}} a_m,
  \]
  we can rewrite the left side of \eqref{2.8} as
  \[
    \int_x^{2x} \bigg| \sum_{ d \sim D, d \mid \Pi(z)} \sum_r b_r\mu(d) g_t(rd) e \big( \alpha r^2d^2 \big) \bigg|^2 dt,
  \]
  where $|b_r| \le \tau(r)^c$. Therefore, \eqref{2.8} follows from Lemma \ref{l2}.

  \paragraph*{\em Case 3:} $D \ge yx^{-1/2}L^{-B}$. Set $V = yx^{-1/2}L^{-B}$. Each $d$ appearing in the summation has a factorization $d = p_1 \cdots p_r$ subject to
  \[
    p_r < \dots < p_1 < z, \quad p_1 \cdots p_r \ge V.
  \]
  Therefore, there is a unique integer $s$, $1 \le s < r$, such that
  \[
    L^B \le z^{-1}V \le p_1 \cdots p_s \le V \le p_1 \cdots p_{s + 1}.
  \]
  On writing $p = p_s$, $p' = p_{s + 1}$, $d_1 = p_1 \cdots p_{s - 1}$, $d_2 = p_{s + 2} \cdots p_r$, we can express the left side of \eqref{2.8} as
  \[
    \int_x^{2x} \bigg| \sum_{p, p'} \sum_{d_1, d_2} \sum_{m \sim M} \sum_k a_m \mu(d_1) \mu(d_2) \psi(d_1, p) g_t(mkpp'd_1d_2) e\big( \alpha (mkpp'd_1d_2)^2 \big) \bigg|^2 dt,
  \]
  where $p, p', d_1, d_2$ are subject to
  \[
    p' < p < z, \quad pp'd_1d_2 \sim D, \quad d_1 \mid \Pi(z), \quad d_2 \mid \Pi(p'), \quad L^B \le d_1p < V \le d_1pp'.
  \]
  Hence, using Lemma \ref{lA} to remove the summation conditions
  \[
    p' < p, \quad pp'd_1d_2 \sim D, \quad \text{and} \quad d_1pp' \ge V,
  \]
  we can show that the left side of \eqref{2.8} is bounded by
  \[
    L^c \int_x^{2x} \bigg| \sum_{L^B \le v \le V} \sum_u \tilde a_v b_u g_t(uv) e\big( \alpha u^2v^2 \big) \bigg|^2 dt + L^c,
  \]
  with coefficients $|\tilde a_v| \le 1$ and $|b_u| \le \tau(u)^c$ (the new variables being $u = mkp'd_2$ and $v = pd_1$). Thus, \eqref{2.8} follows from Lemma \ref{l2}.
\end{proof}

\subsection{Some lemmas from sieve theory}
\label{s2.2}

Let $\Phi(m, z)$ and $\Psi(m, z)$ be the functions defined in \eqref{0.10} and \eqref{0.11}. Lemma \ref{l4} below is Theorem 1 in Tenenbaum \cite[\S III.5]{Tene95}. Lemma \ref{l5} is a variant of Theorem 3 in Tenenbaum \cite[\S III.6]{Tene95}.

\begin{lemma}\label{l4}
  If $x$ and $z$ are large real numbers, then
  \[
    \sum_{m \le x} \Psi(m, z) \ll x\exp \big( - (\log x)/(2\log z) \big).
  \]
\end{lemma}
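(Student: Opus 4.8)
The plan is to bound the sum over $z$-smooth numbers $m \le x$ by a classical Rankin-type argument (the "Rankin trick"). First I would observe that for any parameter $\sigma \in (0, 1)$, the indicator $\Psi(m, z)$ satisfies
\[
  \sum_{m \le x} \Psi(m, z) \le \sum_{\substack{m \ge 1 \\ p \mid m \implies p \le z}} \Big( \frac xm \Big)^{\sigma} = x^{\sigma} \prod_{p \le z} \Big( 1 - p^{-\sigma} \Big)^{-1},
\]
using that $(x/m)^{\sigma} \ge 1$ on the range of summation and extending the sum to all $z$-smooth integers. The task is then to choose $\sigma$ cleverly and estimate the Euler product.

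The key step is bounding the product. Writing $\sigma = 1 - 1/(\log z)$ (or, more precisely, $\sigma = 1 - u/\log z$ and later optimizing in $u$), I would take logarithms and use $-\log(1 - p^{-\sigma}) \le p^{-\sigma} + O(p^{-2\sigma})$, so that
\[
  \sum_{p \le z} \big( -\log(1 - p^{-\sigma}) \big) \le \sum_{p \le z} p^{-\sigma} + O(1).
\]
For $p \le z$ one has $p^{-\sigma} = p^{-1} p^{(1-\sigma)} = p^{-1} \exp\big( (1-\sigma)\log p \big) \le p^{-1} \exp(1-\sigma)\log z = e^{u} p^{-1}$ when $1-\sigma = u/\log z$; hence $\sum_{p \le z} p^{-\sigma} \le e^{u} \sum_{p \le z} p^{-1} = e^{u}(\log\log z + O(1))$ by Mertens' theorem. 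Combining, the product is at most $\exp\big( e^{u} \log\log z + O(1) \big)$, while the prefactor contributes $x^{\sigma} = x \exp\big( -(u/\log z)\log x \big) = x \exp\big( -u \log x / \log z \big)$. Altogether,
\[
  \sum_{m \le x} \Psi(m, z) \ll x \exp\Big( -\frac{u \log x}{\log z} + e^{u}\log\log z \Big).
\]
Taking $u = \tfrac12$ (a crude but sufficient choice) gives exponent $-\tfrac{\log x}{2\log z} + O(\log\log z)$, and since $\log\log z = o(\log x/\log z)$ in the regime where the bound is non-trivial — and is otherwise absorbed by adjusting constants and the fact that the estimate is vacuous when $\log x \ll \log z \log\log z$ — one obtains $\sum_{m \le x} \Psi(m, z) \ll x\exp\big( -(\log x)/(2\log z) \big)$ as claimed. (One should note the estimate is only meaningful when $z$ is small relative to $x$; for $z$ close to $x$ the right-hand side is $\gg x$ and the bound is trivial, so no care is needed there.)

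The main obstacle — really the only subtlety — is making the term $e^{u}\log\log z$ genuinely negligible against $u\log x/\log z$ so that the clean constant $\tfrac12$ in the exponent survives. This is handled by splitting into the case $\log z \le (\log x)/(\log\log x)^2$, say, where $\log\log z = o(\log x / \log z)$ and the error is swallowed, and the complementary case $\log z > (\log x)/(\log\log x)^2$, where $(\log x)/(2\log z) < (\log\log x)^2/2$ so $\exp\big(-(\log x)/(2\log z)\big) \gg x^{o(1)}$ is larger than any fixed power saving and the asserted bound follows trivially from $\sum_{m\le x}\Psi(m,z) \le x$. Since this is a textbook result (it is cited as Theorem 1 in \cite[\S III.5]{Tene95}), I would in practice simply invoke the reference, but the Rankin argument above is the proof one has in mind.
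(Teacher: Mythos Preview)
The paper does not prove this lemma; it simply cites it as Theorem~1 in Tenenbaum \cite[\S III.5]{Tene95}. Your Rankin-trick approach is indeed the method underlying Tenenbaum's proof, and your remark that you would ``in practice simply invoke the reference'' matches exactly what the paper does.

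That said, your sketch has a genuine gap in the case analysis. In Case~2 (where $\log z > (\log x)/(\log\log x)^2$) you claim the bound follows from the trivial estimate $\sum_{m\le x}\Psi(m,z)\le x$. But this would require $x \ll x\exp\big(-(\log x)/(2\log z)\big)$, i.e.\ that $(\log x)/(2\log z)$ remain bounded; in your Case~2 this quantity can be as large as $\tfrac12(\log\log x)^2\to\infty$, so the trivial bound is too weak. The observation that the right-hand side is ``$\gg x^{o(1)}$'' does not rescue the argument: the right-hand side may still be $o(x)$. A related issue lurks in Case~1: with the Rankin parameter fixed at $\tfrac12$ you obtain exponent $-\tfrac{\log x}{2\log z}+e^{1/2}\log\log z$, and while $\log\log z=o(\log x/\log z)$ there, it is not $O(1)$, so it cannot simply be absorbed into the implied constant of the $\ll$.

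The correct split uses the trivial bound only when $u:=(\log x)/\log z$ is bounded by an absolute constant $u_0$ (so that $e^{-u/2}\ge e^{-u_0/2}$ is bounded below), and for $u>u_0$ optimizes the Rankin parameter as a function of $u$ rather than fixing it. The details of making the $\log\log z$ loss disappear uniformly are a little more delicate than your sketch allows --- which is precisely why the result is worth citing rather than reproving.
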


\begin{lemma}\label{l5}
  Let $2 \le z \le x \le z^c$, and let $w$ be the continuous solution of the differential delay equation
  \[
    \begin{cases}
      (tw(t))' = w(t - 1) & \text{if } t > 2, \\
      w(t) = t^{-1}       & \text{if } 1 < t \le 2.
    \end{cases}
  \]
  Then for any fixed $A > 0$,
  \[
    \sum_{m \le x} \Phi(m, z) = \frac 1{\log z} \sum_{z < m \le x} w \left( \frac {\log m}{\log z} \right) + O \left( x (\log x)^{-A} \right),
  \]
the implied constant depending at most on $A$.
\end{lemma}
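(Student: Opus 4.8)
The plan is to read this as a Buchstab-type asymptotic for $\Phi(x,z)=\sum_{m\le x}\Phi(m,z)$, the number of integers up to $x$ with no prime factor below $z$, and to run the classical argument (see Tenenbaum \cite[\S III.6]{Tene95}, of which it is a minor variant); here $w$ is the Buchstab function. Write $u=\log x/\log z$, so that $1\le u\le c$ throughout --- the boundedness of $u$ is what keeps the argument finite and uniform. The only analytic input is the prime number theorem in the form
\[
  \pi(x)-\pi(y)=\sum_{y<m\le x}\frac1{\log m}+O\!\big(x\exp(-c\sqrt{\log x})\big)\qquad(2\le y\le x),
\]
which beats every fixed power of $\log x$; it is crucial that the comparison be made against $\sum(\log m)^{-1}$ rather than against $\mathrm{li}$ or $x/\log x$, since this is precisely what survives in the final error term.

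I would argue by induction on $\lceil u\rceil$ via Buchstab's identity
\[
  \Phi(x,z)=1+\sum_{z\le p\le x}\Phi\!\big(x/p,\,p\big),
\]
which records that a $z$-rough $m\le x$ is either $1$ or has least prime factor $p\ge z$ with $m/p$ a $p$-rough integer $\le x/p$. In the base range $1\le u\le 2$ every prime $p\in[z,x]$ satisfies $x/p\le x^{1-1/u}\le x^{1/2}\le p$, so $\Phi(x/p,p)=1$ and the identity collapses to $\Phi(x,z)=\pi(x)-\pi(z)+O(1)$; by the prime number theorem and $w(t)=t^{-1}$ on $(1,2]$ this is $(\log z)^{-1}\sum_{z<m\le x}w(\log m/\log z)+O(x(\log x)^{-A})$, as wanted. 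For the inductive step, with $k<u\le k+1$, split the primes in Buchstab's identity at $x^{1/2}$: the range $x^{1/2}<p\le x$ contributes $\pi(x)-\pi(x^{1/2})$, handled by the prime number theorem, while for $z\le p\le x^{1/2}$ one has $\log(x/p)/\log p\le u-1\le k$, so the inductive hypothesis applies to $\Phi(x/p,p)$. Summing the errors from the hypothesis over these $p$ costs only a factor $\sum_{z\le p\le x^{1/2}}p^{-1}$, which is $O(1)$ because $z\ge x^{1/c}$ forces $\log z\asymp\log x$; with at most $\lceil c\rceil$ levels of recursion the total error stays $O(x(\log x)^{-A})$. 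This reduces the lemma to the purely analytic claim that the resulting iterated-Buchstab main term equals $(\log z)^{-1}\sum_{z<m\le x}w(\log m/\log z)+O(x(\log x)^{-A})$.

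That claim is the crux, and it is where the differential-delay equation does its work. Integrating $(tw(t))'=w(t-1)$ and using $2w(2)=1$ gives the identity $tw(t)=1+\int_{0}^{t-1}w(s)\,ds$ for all $t\ge 1$, with the convention $w(s)=0$ for $s<1$. Substituting $t=\log m/\log z$ into this rewrites $(\log z)^{-1}\sum_{z<m\le x}w(\log m/\log z)$ as $\sum_{z<m\le x}(\log m)^{-1}$ plus a term in which $w$ reappears under an integral; peeling the least prime and invoking the prime number theorem in the form above to pass between $\sum(\log m)^{-1}$ and prime-counting, one checks that this is exactly the expression produced by one step of Buchstab's identity applied to $(\log z)^{-1}\sum_{z<m\le x}w(\log m/\log z)$ itself. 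In other words, the differential-delay equation is engineered so that this discrete main term obeys the same recursion as $\Phi$ up to an admissible error, and the induction closes.

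The main obstacle is this last verification: all the error terms are harmless (each use of the prime number theorem saves an arbitrary power of $\log x$, and there are only boundedly many such uses), but organising the substitution of the integrated differential-delay equation, the reindexing of the resulting double sum, and the repeated passage between $\sum(\log m)^{-1}$ and $\pi$ so that everything matches the iterated-Buchstab expression takes some care. A minor technical point is that $w$ fails to be smooth at the integers --- that is, at the finitely many primes $p$ for which $\log(x/p)/\log p\in\mathbb Z$ --- but these form a negligible set and cause no trouble; and everything is uniform for $2\le z\le x\le z^c$ precisely because $u\le c$ bounds the depth of the recursion and forces $\log z\asymp\log x$.
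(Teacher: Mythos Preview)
The paper does not supply its own proof of this lemma; it simply records it as ``a variant of Theorem~3 in Tenenbaum \cite[\S III.6]{Tene95}'' and moves on. Your sketch is the standard Buchstab-induction argument behind that theorem (you cite the same source), and it is correct in outline---the base case $1\le u\le 2$, the inductive step via Buchstab's identity with errors controlled by $\sum_{z\le p\le x^{1/2}}p^{-1}=O(1)$ thanks to $u\le c$, and the matching of main terms via the integrated delay equation $tw(t)=1+\int_{1}^{t-1}w(s)\,ds$---so there is nothing to compare beyond noting that your approach and the paper's cited one coincide.
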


We now introduce some standard sieve-theoretic notation. If $\mathcal A$ is an integer sequence, we define
\[
  \mathcal A_d = \big\{ a \in \mathcal A \; | \; m \equiv 0 \pmodulo d \big\}.
\]
Suppose that when $d$ is squarefree, we have
\begin{equation}\label{2.9}
  |\mathcal A_d| = g(d)N + r(d),
\end{equation}
where $N$ is a large parameter independent of $d$ and $g$ is a multiplicative function such that $0 \le g(p) < 1$ for all $p$. We assume that there exist constants $\kappa \ge 0$ and $K \ge 2$ such that
\begin{equation}\label{2.10}
  \prod_{w \le p < z} \big( 1 - g(p) \big)^{-1} \le \bigg( \frac {\log z}{\log w} \bigg)^{\kappa} \bigg( 1 + \frac K{\log w} \bigg)
\end{equation}
whenever $2 \le w < z$. The next lemma is a version of the upper-bound Rosser--Iwaniec sieve: see Iwaniec \cite[Theorem 1]{Iwan80a}.

\begin{lemma}\label{lB}
  Let $z \ge 2$, $s \ge 1$, and let $\mathcal A$ be an integer sequence. Suppose that $N$, the arithmetic function $g$ and the remainders $r(d)$ are defined by \eqref{2.9}, and that \eqref{2.10} holds for some absolute constants $\kappa \ge 0$ and $K \ge 2$. Then
  \[
    \sum_{a \in \mathcal A} \Phi(a, z) \le NV(z) \big( 1 + O \big( e^{-s} \big) \big)  + \sum_{d \le z^s} \mu(d)^2 |r(d)|,
  \]
  where $V(z) = \prod_{p \le z} \big( 1 - g(p) \big)$. The implied constant depends at most on $\kappa$ and $K$.
\end{lemma}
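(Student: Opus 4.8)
The plan is to invoke the upper-bound half of the Rosser--Iwaniec (``$\beta$'') sieve, which is precisely Theorem~1 of Iwaniec~\cite{Iwan80a}; essentially all that is required is to check that our hypotheses are those of that theorem and then to read off the stated shape of the main and error terms. Recall that
\[
  \Phi(a, z) = \sum_{\substack{d \mid a\\ p \mid d \implies p < z}} \mu(d),
\]
so an upper bound for $\sum_{a \in \mathcal A} \Phi(a, z)$ follows from any system of \emph{upper-bound sieve weights} $(\lambda_d^+)$: real numbers supported on squarefree $d$ with all prime factors $< z$ and with $d < D := z^s$, satisfying $\lambda_1^+ = 1$, $|\lambda_d^+| \le 1$, and $\sum_{d \mid m} \lambda_d^+ \ge \sum_{d \mid m} \mu(d)$ for every such modulus $m$. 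Rosser's weights of level $D$ --- obtained by retaining in the Legendre--M\"obius expansion only those $d = p_1 \cdots p_r$ (with $p_1 > \cdots > p_r$) whose partial products obey a suitable one-sided growth condition relative to $D$ --- have exactly these properties, the construction and the purely combinatorial verification of the inequality being carried out in detail in \cite{Iwan80a}.

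Granting such weights, the rest is bookkeeping. Interchanging summation and using \eqref{2.9},
\[
  \sum_{a \in \mathcal A} \Phi(a, z) \le \sum_{a \in \mathcal A} \sum_{d \mid a} \lambda_d^+ = \sum_{d < D} \lambda_d^+ |\mathcal A_d| = N \sum_{d < D} \lambda_d^+ g(d) + \sum_{d < D} \lambda_d^+ r(d).
\]
Since $|\lambda_d^+| \le 1$ and $\lambda_d^+$ is supported on squarefree $d < z^s$, the last sum is at most $\sum_{d \le z^s} \mu(d)^2 |r(d)|$ in absolute value, which is the remainder term in the statement. It thus remains to show that
\[
  \sum_{d < D} \lambda_d^+ g(d) \le V(z) \big( 1 + O(e^{-s}) \big),
\]
with implied constant depending only on $\kappa$ and $K$.

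This last estimate is the analytic core, and it is exactly what Iwaniec's iteration yields under hypothesis \eqref{2.10} (the assertion that the sieve has dimension $\kappa$ with error constant $K$): applying Buchstab's identity to the Rosser weights and comparing the ensuing recursion with the adjoint differential--delay system, one obtains $\sum_{d < D} \lambda_d^+ g(d) \le V(z) F(s)$, where $F$ is Iwaniec's upper sieve function attached to the dimension $\kappa$. The function $F$ is continuous and non-increasing on $[1, \infty)$, is bounded there for each fixed $\kappa$, tends to $1$ as $s \to \infty$, and satisfies $F(s) - 1 \ll_\kappa e^{-s}$; absorbing the bounded range $s = O_\kappa(1)$ into the implied constant then gives $F(s) \le 1 + O_{\kappa, K}(e^{-s})$ for all $s \ge 1$. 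Combining the three displays proves the lemma.

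The one genuinely substantial ingredient is the pair of facts $\sum_{d < D} \lambda_d^+ g(d) \le V(z) F(s)$ and $F(s) = 1 + O(e^{-s})$: here the full $\beta$-sieve machinery is needed --- the combinatorial analysis of Rosser's weights, the Buchstab recursion, and the theory of the functions $F$ and $f$ solving the associated delay--differential equations. I do not intend to reproduce that material; it is established in \cite{Iwan80a}, and for the present lemma it is enough to observe, as above, that \eqref{2.9}--\eqref{2.10} are precisely the hypotheses of that theorem and that its conclusion specialises to the claimed bound.
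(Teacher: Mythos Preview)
Your proposal is correct and takes essentially the same approach as the paper: the paper does not give a proof at all but simply cites Iwaniec \cite[Theorem 1]{Iwan80a}, and your write-up amounts to an expanded explanation of why that citation delivers the stated bound. The added detail (Rosser weights, the bound $\sum_d \lambda_d^+ g(d) \le V(z)F(s)$, and $F(s)=1+O(e^{-s})$) is accurate and merely unpacks what the paper leaves implicit.
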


\subsection{The singular series}
\label{s2.3}

In this section, we collect the necessary information about the singular series for sums of a prime and a square of a prime and for sums of three squares of primes. Let $S(q, a)$ be given by~\eqref{0.1}. We define
\begin{gather}
  A_2(n, q) = \frac {\mu(q)}{\phi(q)^2} \sum_{ \substack{ 1 \le a \le q\\ (a, q) = 1}} S(q, a)e_q(-an), \quad A_3(n, q) = \frac 1{\phi(q)^3} \sum_{ \substack{ 1 \le a \le q\\ (a, q) = 1}} S(q, a)^3e_q(-an), \notag\\
  \mathfrak S_j(n, P) = \sum_{q \le P} A_j(n, q), \quad \mathfrak P_j(n, P) = \prod_{p \le P} \big( 1 + A_j(n, p) + A_j(n, p^2) + \cdots \big). \label{2.11}
\end{gather}
Note that $\mathfrak S_2(n, P)$ is the sum defined earlier in \eqref{0.8}. These sums and products were studied in great detail by Schwarz \cite[\S\S 2--3]{Schw61}. Here is a list of some facts that can be found there:
\begin{itemize}
  \item [i)] $A_j(n, q)$ is multiplicative in $q$.
  \item [ii)] $A_3(n, p^k) = 0$ when $p \ge 3, k \ge 2$ or $p = 2, k \ge 4$.
  \item [iii)] If $n \in \mathcal H_2$, then $A_2(n, p) > -1$ for all $p$.
  \item [iv)] If $n \in \mathcal H_3$, then $A_3(n, 2^j) \ge 0$ and $A_3(n, p) > -1$ for all $p \ge 3$.
  \item [v)] $\sum\limits_{n = 1}^{q_1q_2} A_j(n, q_1)A_j(n, q_2) = 0$ when $q_1 \ne q_2$.
\end{itemize}
Furthermore, it is not difficult to show that
\begin{equation}\label{2.11a}
  A_2(n, p) = \begin{cases}
    (p - 1)^{-1} & \text{if } p \mid n, \\
    \big( \frac np \big)p^{-1} + O \big( p^{-2} \big) & \text{if } p \nmid n,
  \end{cases}
\end{equation}
where $\big( \frac np \big)$ is the Legendre symbol modulo $p$. There is also a similar expression for $A_3(n, p)$ (see Mikawa \cite[(4.1)]{Mika97}), from which we can deduce that $|A_3(n, p)| \le 3p^{-1} + O\big( p^{-2} \big)$. Hence,
\begin{equation}\label{2.12}
  |A_j(n, q)| \ll q^{-1} \prod_{p \mid q} \big( 1 + p^{-1} \big)^c \ll q^{-1}(\log\log q)^c.
\end{equation}
We also have
\begin{equation}\label{e3.8}
  (\log P)^{-1} \ll \mathfrak P_2(n, P) \ll \log P, \quad (\log P)^{-3} \ll \mathfrak P_3(n, P) \ll (\log P)^3.
\end{equation}
Finally, we state and prove a lemma, which allows us to approximate $\mathfrak S_j(n, P)$ by $\mathfrak P_j(n, P)$ on average over $n$, provided that $P$ is small compared to $n$. The lemma is essentially a generalization of a result of Schwarz \cite[Satz 1]{Schw61}, but our proof is considerably shorter.

\begin{lemma}\label{l8}
  Let $A \ge 2$ and $\eps > 0$ be fixed. Suppose that $x^{\eps} \le y \le x$ and $P \le Q \le \exp\big( (\log x)^{1 - \eps} \big)$. Then
  \begin{equation}\label{e3.9}
    \sum_{x < n \le x + y} \big( \mathfrak S_j(n, P) - \mathfrak P_j(n, Q) \big)^2 \ll yP^{-1}\log x + y(\log x)^{-A}.
  \end{equation}
\end{lemma}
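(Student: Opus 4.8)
The plan is to expand the square in \eqref{e3.9}, split off the diagonal contribution, and control the off-diagonal terms by the (near-)orthogonality relation (v) combined with the size estimate \eqref{2.12}. First I would write
\[
  \mathfrak S_j(n, P) - \mathfrak P_j(n, Q) = \sum_{q > P} c_j(n, q),
\]
where $c_j(n, q) = A_j(n, q)$ for squarefree $q$ not a perfect power contribution and, more precisely, $c_j(n, q)$ collects the terms of $\mathfrak P_j(n, Q) = \sum_{q} A_j(n, q)$ (expanding the product) that survive after subtracting the partial sum $\mathfrak S_j(n, P)$; by multiplicativity of $A_j$ in $q$ and facts ii) and \eqref{2.12}, only $q$ with prime factors $\le Q$ occur, the prime-power parts are bounded (for $j=3$ only $p=2$ matters, by ii), and for $j=2$ only squarefree $q$ occur since $A_2(n,q)$ has a $\mu(q)$ factor), and one has $c_j(n, q) \ll q^{-1}(\log\log q)^c$ uniformly. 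The key point is that every $q$ appearing has $q > P$.

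Next I would estimate
\[
  \sum_{x < n \le x + y} \big( \mathfrak S_j(n, P) - \mathfrak P_j(n, Q) \big)^2
  = \sum_{q_1, q_2 > P} \ \sum_{x < n \le x + y} c_j(n, q_1) c_j(n, q_2).
\]
For the inner sum I would use the exact orthogonality in fact v): $\sum_{n=1}^{q_1 q_2} A_j(n, q_1) A_j(n, q_2) = 0$ for $q_1 \ne q_2$, and a similar exact relation for the prime-power building blocks of $c_j$. Since the interval $(x, x+y]$ is \emph{not} a full period of length $q_1 q_2$, summing over it introduces an error of at most $O(1)$ complete periods plus one partial period, each contributing $\ll (\log\log q_1 q_2)^{2c}$ after using \eqref{2.12}; thus for $q_1 \ne q_2$ the inner sum is $\ll (\log\log(q_1 q_2))^{2c}$, with no factor of $y$. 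The number of admissible pairs $(q_1, q_2)$ with $q_1, q_2 \le Q$ is $\le Q^2 \le \exp(2(\log x)^{1-\eps})$, which is $\ll (\log x)^{-A}$ times $x^{\eps} \le y$ — here the hypothesis $Q \le \exp((\log x)^{1-\eps})$ is exactly what makes the total off-diagonal contribution absorbable into the $y(\log x)^{-A}$ term. The diagonal $q_1 = q_2 = q$ contributes
\[
  \sum_{q > P} \ \sum_{x < n \le x+y} c_j(n, q)^2 \ll y \sum_{q > P} q^{-2}(\log\log q)^{2c} \ll y P^{-1} \log x,
\]
after bounding the inner sum trivially by $y \max_n c_j(n,q)^2 \ll y q^{-2}(\log\log q)^{2c}$ (there is no cancellation to exploit on the diagonal, but the sum over $q > P$ converges fast enough), which gives the first term on the right of \eqref{e3.9}.

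The main obstacle is bookkeeping rather than depth: one must carefully identify the terms $c_j(n, q)$ — that is, expand the Euler product $\mathfrak P_j(n, Q)$ and subtract $\mathfrak S_j(n, P)$ — and verify that each surviving block still obeys both the size bound \eqref{2.12} and an exact vanishing relation over complete residue classes (the extension of fact v) to the products appearing in $\mathfrak P_j$). For $j = 2$ this is clean because $A_2(n, q)$ is supported on squarefree $q$ and is genuinely multiplicative, so $\mathfrak P_2(n, Q) = \sum_{q \mid \Pi(Q)} A_2(n, q)$ and $c_2(n, q) = A_2(n, q) \mathbf 1[q > P, \, q \mid \Pi(Q)]$. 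For $j = 3$ one additionally has the prime-power terms at $p = 2$ (at most $2^3 = 8$), which are uniformly bounded and cause no trouble. Once the structure is pinned down, the diagonal/off-diagonal split above and the counting of pairs finish the proof; the two error terms in \eqref{e3.9} correspond precisely to the diagonal ($yP^{-1}\log x$) and the off-diagonal ($y(\log x)^{-A}$, via $Q^2$ incomplete sums).
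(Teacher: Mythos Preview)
Your diagonal treatment and the overall strategy (expand, square, split diagonal/off-diagonal, use the orthogonality relation v)) match the paper's. The gap is in the off-diagonal step. You write that ``the number of admissible pairs $(q_1, q_2)$ with $q_1, q_2 \le Q$ is $\le Q^2$,'' but expanding the product $\mathfrak P_j(n,Q) = \prod_{p \le Q}(1 + A_j(n,p) + \cdots)$ produces terms $A_j(n,q)$ indexed by the $Q$-\emph{smooth} integers $q$ (all prime factors $\le Q$), not by integers $q \le Q$. For $j=2$ these $q$ are the squarefree divisors of $\Pi(Q)$, and there are $2^{\pi(Q)}$ of them; with $Q$ allowed as large as $\exp((\log x)^{1-\eps})$ this count is far larger than $x$, and your bound $\ll Q^2 (\log\log Q)^{2c}$ for the total off-diagonal contribution collapses. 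Nor can the argument be rescued by using the sharper per-pair bound $\ll y(q_1 q_2)^{-1}(\log\log q_1q_2)^{2c}$ available when $q_1 q_2 > y$: summing $(q_1 q_2)^{-1}$ over $Q$-smooth squarefree pairs gives only $\asymp (\log Q)^2$, not anything small.

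The missing ingredient is a truncation of the $q$-sum at a level small relative to $y$. The paper writes $\mathfrak P_j(n,Q) = \sum_{q} A_j(n,q)\Psi(q,Q)$, cuts at $Q_0 = y^{1/3}$, and bounds the tail $q > Q_0$ pointwise in $n$ via the smooth-number estimate of Lemma~\ref{l4}: since $(\log Q_0)/(2\log Q) \gg (\log x)^{\eps}$ under the hypothesis $Q \le \exp((\log x)^{1-\eps})$, the tail is $\ll (\log x)^{-A-2}$. This leaves $\mathfrak S_j(n,P) - \mathfrak P_j(n,Q) = \sum_{P < q \le Q_0} \theta_q A_j(n,q) + O((\log x)^{-A-2})$ with $\theta_q = 1 - \Psi(q,Q)$, and now every off-diagonal pair satisfies $q_1 q_2 \le Q_0^2 = y^{2/3}$, so each incomplete-period error is $\ll (q_1 q_2)^{\eps/2}$ and the total is $\ll Q_0^{2+\eps} \ll y^{3/4} \ll y(\log x)^{-A}$. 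In short: your plan is right but starts one step too early---you must first show that only $q \le y^{1/2}$ (say) matter, and that is where the hypothesis on $Q$ and Lemma~\ref{l4} actually enter.
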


\begin{proof}
  We may assume that $\eps < \frac 14$. Put $Q_2 = \prod_{p \le Q} p$, $Q_3 = 4Q_2$, and $Q_0 = y^{1/3}$. We have
  \begin{equation}\label{2.15}
    \mathfrak P_j(n, Q) = \sum_{q \le Q_j} A_j(n, q)\Psi(q, Q) = \sum_{q \le Q_0} A_j(n, q)\Psi(q, Q) + \Sigma,
  \end{equation}
  where
  \[
    \Sigma = \sum_{Q_0 < q \le Q_j} A_j(n, q)\Psi(q, Q) \ll  \sum_{Q_0 < q \le Q_j} q^{-1}(\log\log q)^c\Psi(q, Q).
  \]
  An appeal to Lemma \ref{l4} then yields
  \[
    \Sigma \ll (\log\log Q_j)^c \sum_{Q_0 < q \le Q_j} q^{-1}\Psi(q, Q) \ll (\log Q)^c \exp \big( -(\log Q_0)/(2\log Q) \big).
  \]
  Since $\Psi(q, Q) = 1$ when $1 \le q \le P$, we deduce from this inequality and \eqref{2.15} that
  \[
    \mathfrak S_j(n, P) - \mathfrak P_j(n, Q) = \sum_{P < q \le Q_0} \theta_q A_j(n, q) + O \big( (\log x)^{-A - 2} \big),
  \]
  where $\theta_q = 1 - \Psi(q, Q)$. Since the sum over $q$ does not exceed $(\log x)^2$ (recall \eqref{2.12}), the desired conclusion then follows from the bound
  \begin{equation}\label{2.16}
    \sum_{x < n \le x + y} \sum_{P < q_1, q_2 \le Q_0} \theta_{q_1}\theta_{q_2} A_j(n, q_1)A_j(n, q_2) \ll yP^{-1}\log x + y(\log x)^{-A}.
  \end{equation}
  By \eqref{2.12},
  \[
    \sum_{x < n \le x + y} \sum_{P < q \le Q_0} \theta_q^2 A_j(n, q)^2 \ll \sum_{x < n \le x + y} \sum_{q > P} q^{-2}(\log\log q)^c \ll yP^{-1}\log x.
  \]
  On the other hand, when $q_1 \ne q_2$, \eqref{2.12} and v) above yield
  \[
    \bigg| \sum_{x < n \le x + y} A_j(n, q_1)A_j(n, q_2) \bigg| \le 2 \sum_{n = 1}^{q_1q_2} |A_j(n, q_1)A_j(n, q_2)| \ll (q_1q_2)^{\eps/2},
  \]
  whence
  \[
    \sum_{x < n \le x + y} \sum_{ \substack{P < q_1 < q_2 \le Q_0\\ q_1 \ne q_2}} \theta_{q_1}\theta_{q_2} A_j(n, q_1)A_j(n, q_2)
    \ll Q_0^{2 + \eps} \ll y^{3/4}.
  \]
  This establishes \eqref{2.16}.
\end{proof}

\section{Proof of Theorem \ref{thm1}}
\label{s4}

We first verify the hypotheses of Proposition \ref{p1} for $R(n; \varpi, \lambda_0)$, where $\varpi$ is the indicator function of the primes and $\lambda_0$ is defined by \eqref{e2.22}. These functions clearly satisfy hypotheses (A$_{j.1}$) in \S\ref{s1}. When $\theta_1 > \frac 7{12}$, $\varpi$ satisfies hypothesis (A$_{1.2}$) with $f_1(u) = (\log u)^{-1}$ ($u \ge 2$). This is a short interval form of the Siegel--Walfisz theorem that can be established by the same methods as Huxley's theorem on primes in short intervals. The same methods establish also hypothesis (A$_{2.2}$) for $\lambda_0$ with
\[
  f_2(u) = \frac 1{\log u} + \int_{z_0}^{\sqrt u} \frac {dt}{t(\log t)(\log(u/t))} \qquad (u \in \mathbf I_2),
\]
provided that $\theta_2 > \frac 7{12}$. The first term in the above sum accounts for the primes in the support of $\lambda_0$, and the second term accounts for products $p_1p_2$ with $z_0 < p_1 \le p_2$. (The reader can find a justification of hypothesis (A$_{2.2}$) in the case when $\lambda_2 = \varpi$ in \cite[Lemma 5.1]{LiZh97} or in Mikawa and Peneva \cite[Lemma 2]{MiPe07}.)

Finally, we consider hypothesis (A$_{2.3}$). We set $z_1 = Y^{1/6 + \eps/2}$ and note that
\[
  \lambda_0(m) = \Phi(m, z_0) = \Phi(m, z_1) - \sum_{ \substack{ z_1 < p \le z_0\\ p \mid m}} \Phi \big( mp^{-1}, p \big).
\]
Since $mp^{-1} \le Y^{1/3 - \eps/2}$ in the sum above, we have $\Phi\big( mp^{-1}, p \big) = \varpi \big( mp^{-1} \big) = \Phi\big( mp^{-1}, z_1 \big)$. Hence,
\begin{equation}\label{e4.1}
  \lambda_0(m) = \Phi(m, z_1) - \sum_{ \substack{ z_1 < p \le z_0\\ p \mid m}} \Phi \big( mp^{-1}, z_1 \big) = \lambda_0'(m) - \lambda_0''(m), \quad \text{say}.
\end{equation}
Suppose that $\alpha \in \mathfrak m$ and $\theta_2 \ge \frac 23 + \eps$ (note that the latter condition ensures that $z_1 \le HY^{-1/2}L^{-2B}$). Then Lemma \ref{l3} with $x = Y$, $y = H$, $(m, k) = (1, m)$ and $z = z_1$ establishes hypothesis (A$_{2.3}$) for $\lambda_0'$; the same lemma with $x = Y$, $y = H$, $(m, k) = (p, mp^{-1})$ and $z = z_1$ establishes hypothesis (A$_{2.3}$) for $\lambda_0''$. The hypothesis (A$_{2.3}$) for $\lambda_0$ then follows from \eqref{e4.1}. We remark that the choice of $z_0$ in \eqref{e2.22} is determined by the hypothesis on $M$ in the application of Lemma \ref{l3} to $\lambda_0''$.

Suppose now that $\theta_1 \ge \frac 7{12} + \frac 12\eps$ and $\theta_2 \ge \frac 23 + \eps$. Having verified all the hypotheses of Proposition \ref{p1}, we can then apply that proposition to get
\begin{equation}\label{e4.2}
  R(n; \varpi, \lambda_0) = \mathfrak S_2(n, P)\mathfrak I(n; \varpi, \lambda_0) + O \big( Y^{1/2}L^{-A} \big)
\end{equation}
for almost all $n \in \mathcal H_2 \cap (X, X + H]$. Note that with the above choices of $f_1$ and $f_2$, we have
\begin{equation}\label{e4.3}
  \mathfrak I(n; \varpi, \lambda_0) = r_2(n)\big( 1 + O \big( L^{-1}\log L \big) \big).
\end{equation}
Combining \eqref{e2.23}, \eqref{e3.8}, \eqref{e3.9}, \eqref{e4.2} and \eqref{e4.3}, we obtain the asymptotic formula
\begin{equation}\label{e4.4}
  R_2(n) = \mathfrak S_2(n, P)r_2(n)\big( 1 + O \big( L^{-1}\log L \big) \big) - R_0(n)
\end{equation}
for almost all $n \in \mathcal H_2 \cap (X, X + H]$, provided that $X^{7/18 + \eps} \le H \le X^{1 - \eps}$ and $P = (\log X)^B$ with $B$ sufficiently large in terms of $A$. Therefore, Theorem \ref{thm1} follows from the following proposition.

\begin{proposition}\label{p2}
  Let $A > 0$, $\delta > 0$ and $\eps > 0$ be fixed, and suppose that $Y^{\eps} \le H \le Y^{1 - \eps}$. There exists a $B_0 = B_0(A) > 0$ such that when $B \ge B_0$, one has
  \[
    R_0(n) \ll r_2(n)\mathfrak S_2(n, P)L^{-1 + \delta}
  \]
  for all but $O \big( HL^{-A} \big)$ integers $n \in \mathcal H_2 \cap (X, X + H]$.
\end{proposition}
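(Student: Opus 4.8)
The plan is to bound $R_0(n)$ by recognizing it as a sifted quantity and applying the upper-bound Rosser--Iwaniec sieve (Lemma \ref{lB}) in a form amenable to averaging over $n$. Recall that $R_0(n)$ counts solutions of $p_1 + (p_2 p_3)^2 = n$ with $p_1 \in \mathbf I_1$, $z_0 < p_2 \le Y^{1/4}$, $p_2 \le p_3$, and $p_2 p_3 \in \mathbf I_2$. First I would fix the smaller prime $p_2 = p$ in the dyadic range $p \asymp D$ for some $D \in (z_0, Y^{1/4}]$; for each such $p$, the remaining count is over representations $p_1 + p^2 k^2 = n$ with $k$ running over primes in an interval determined by $p$ and $\mathbf I_2$, and $k > p$. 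The idea is to drop the primality of $k$ by majorizing $\varpi(k) \le \Phi(k, w)$ for a suitable sifting level $w = w(D)$ (roughly $w = (Y^{1/2}/D)^{1/2}$ up to a small power of $P$, so that the sieve dimension stays under control), and then to drop the primality of $p_1$ by majorizing $\varpi(p_1) \le \Phi(p_1, P)$ or simply by inserting $\varpi(p_1) \le 1$ combined with a short-interval prime count. So the core object becomes, for each fixed $p$,
\[
  \sum_{\substack{p^2 k^2 + m = n \\ m \in \mathbf I_1,\ k \in \mathbf I(p)}} \varpi(m) \Phi(k, w),
\]
which I would bound by Lemma \ref{lB} applied to the sequence $\mathcal A = \mathcal A(p, n) = \{\, k : p^2 k^2 \equiv n - m \text{ for some prime } m \in \mathbf I_1,\ k \in \mathbf I(p) \,\}$ (counted with multiplicity $\varpi(n - p^2 k^2)$).

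The second step is to identify the density function $g$ and the main term. For the sequence $\mathcal A(p, n)$, the congruence-sum $|\mathcal A_d|$ for squarefree $d$ coprime to $p$ is, heuristically, $g(d)$ times the total weight $N = \sum_{m \in \mathbf I_1,\ p^2 k^2 + m = n} \varpi(m)$, where $g(d) = d^{-1} \prod_{\ell \mid d} \rho(\ell)$ with $\rho(\ell)$ the number of residue classes $k \pmod \ell$ for which $n - p^2 k^2$ can be prime — this brings in the local factor tied to the singular series $\mathfrak{S}_2(n, P)$ through the condition that $n - p^2 k^2$ be coprime to $\ell$ and run over an admissible residue class. The key sieve-theoretic point is that $g(p) < 1$ and the condition \eqref{2.10} holds with dimension $\kappa = 1$; after feeding the level of distribution $z^s$ up to roughly $D^{?}$ (where the level must be $\le$ a small power of $Y/D^2$ so that the remainder $\sum_{d \le z^s} \mu^2(d) |r(d)|$ is controlled), the sieve yields
\[
  R_0(n) \ll \sum_{z_0 < p \le Y^{1/4}} N(p, n)\, V(w) \bigl(1 + O(e^{-s})\bigr) + (\text{remainder}),
\]
and $V(w) \asymp (\log w)^{-1} \asymp L^{-1}$ by Mertens, while $\sum_p N(p, n) V(w)$, after summing the heuristic main term $N(p,n) \approx r_1(p, n) \mathfrak{S}_2(n, P)$ over $p$, reconstructs exactly $r_2(n) \mathfrak{S}_2(n, P)$ times the ``Buchstab-type'' integral $\int_{1/4}^{1/2} \frac{dt}{t(1/2-t)}$ or similar — giving a bound of the shape $c\, r_2(n) \mathfrak{S}_2(n, P) L^{-1}$ with an explicit constant $c$, hence the $L^{-1+\delta}$ saving once we verify $c$ is bounded.

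The third step handles the averaging over $n$, which is where the exceptional set of size $O(HL^{-A})$ enters: the remainder terms $r(d)$ in \eqref{2.9} involve counts of primes $m \in \mathbf I_1$ in arithmetic progressions modulo $d$ (or rather the bilinear error when $n - p^2 k^2$ is constrained), and individually these are not small, but on average over $n \in (X, X+H]$ they are — this is exactly where Lemma \ref{l3} (via the circle-method machinery of Proposition \ref{p1}) or a direct Bombieri--Vinogradov-type input in short intervals is needed. Concretely, I would express the error from the sieve remainder as a sum over moduli $d$ of short-interval prime-counting discrepancies, apply Cauchy--Schwarz, and invoke a short-interval large-sieve / Bombieri--Vinogradov estimate to conclude that the total discrepancy is $O(HL^{-A})$ outside an exceptional set of the same size. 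Since $H \ge Y^\eps$, there is enough room: the level of distribution needed in the sieve is a small power of $Y$, comfortably below what short-interval Bombieri--Vinogradov delivers with $H$ this large. Finally I would need the lower bound $\mathfrak{S}_2(n, P) \gg L^{-1}$ (or rather $\mathfrak{S}_2(n, P) \asymp \mathfrak{P}_2(n,P) \gg (\log P)^{-1}$ via \eqref{e3.8} and Lemma \ref{l8}, valid for all but $O(HL^{-A})$ values of $n$) to ensure the claimed inequality $R_0(n) \ll r_2(n) \mathfrak{S}_2(n, P) L^{-1+\delta}$ is not vacuous; combining the positivity of $\mathfrak{S}_2(n,P)$ for $n \in \mathcal{H}_2$ (fact iii) with the average bound from Lemma \ref{l8} suffices.

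The main obstacle I anticipate is the interface between the sieve remainder and the short-interval prime estimate: one must arrange the majorant $\Phi(k, w)$ with $w$ \emph{as large as possible} (to make $V(w) \asymp L^{-1}$ rather than merely $\ll L^{-\theta}$ for some $\theta < 1$), while keeping the sieve's level of distribution $z^s$ small enough that the accumulated remainder, summed over $p \asymp D$ and over squarefree $d \le z^s$, is controllable on average over $n$. Balancing these forces the constraint $\theta_2 > \tfrac23$ (equivalently $H > Y^{2/3+\eps/2}$), and tracking the Buchstab constant $c$ through the nested sieve decomposition — making sure it stays $O(1)$ rather than growing — is the delicate bookkeeping step that underpins the $L^{-1+\delta}$ rather than a weaker $L^{-c'}$ saving.
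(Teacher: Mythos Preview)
Your approach differs from the paper's in a way that creates a genuine gap. You propose to sieve the variable $k = p_3$ (the larger prime in the square), keeping the weight $\varpi(n - p^2k^2)$ on the linear prime. The paper does the opposite: it sieves the linear prime $p_1$. Concretely, the paper sets
\[
  \mathcal A = \big\{\, m \in \mathbf I_1 : m = n - (p_1p_2)^2,\ p_1 \in (z_0, Y^{1/4}],\ p_1 \le p_2,\ p_1p_2 \in \mathbf I_2 \,\big\},
\]
so that $R_0(n) \le \sum_{m \in \mathcal A} \Phi(m, z)$, and applies Lemma~\ref{lB} with the \emph{very small} level $z = \exp(L^{1-\delta/2})$. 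Then $|\mathcal A_d|$ counts pairs $(p_1,p_2)$ with $(p_1p_2)^2 \equiv n \pmod d$, so $g(d) = \phi(d)^{-1}|\mathcal R_d|$ where $\mathcal R_d$ is the set of square roots of $n$ mod $d$; the sieve has dimension $\kappa = 2$, not $1$. The remainder $\sum_{d\le D}\mu(d)^2|r(d)|$ reduces to the distribution of the single prime $p_2$ in arithmetic progressions, and is handled by the \emph{ordinary} Bombieri--Vinogradov theorem with $D = Y^{1/9}$. No short-interval input and no averaging over $n$ is needed for the remainder; the averaging over $n$ enters \emph{only} at the very end, via Lemma~\ref{l8}, to replace $\mathfrak P_2(n,z)$ by $\mathfrak S_2(n,P)$.

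Your route, by contrast, would require controlling $\sum_{k\equiv 0\,(d)} \varpi(n - p^2k^2)$, i.e.\ primes along a quadratic progression, uniformly in $d$ up to a power of $Y$. This is not available for individual $n$, which is why you are forced to average over $n$ and invoke short-interval machinery --- and that in turn is what produces your spurious constraint $\theta_2 > \tfrac23$. But Proposition~\ref{p2} must hold for every $H$ with $Y^\eps \le H \le Y^{1-\eps}$; in particular it is applied in the proof of Theorem~\ref{thm2} with $\theta_2 = \tfrac35 - 2\eps$. So an argument that forces $\theta_2 > \tfrac23$ does not prove the proposition. The fix is exactly the paper's choice of sieved variable: sieve $p_1$ with a sub-polynomial $z$, so that the level $D = z^s$ can be any fixed power of $Y$ while $e^{-s}$ is negligible, and observe that $V(z) \ll \mathfrak P_2(n,z)/\log z$ via \eqref{2.11a}, which together with $N \ll Y^{1/2}L^{-2}\log L$ delivers the bound $R_0(n) \ll r_2(n)\mathfrak P_2(n,z)L^{-1+\delta}$ for \emph{every} $n$.
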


\begin{proof}
  We estimate $R_0(n)$ by means of an upper-bound sieve. Observe that $R_0(n)$ is the number of primes in the sequence
  \[
    \mathcal A = \big\{ m \in I_1 \; \big| \; m = n - (p_1p_2)^2 \text{ with } \; p_1 \in \mathbf I_3, \; p_1 \le p_2, \; p_1p_2 \in \mathbf I_2 \big\},
  \]
  where $\mathbf I_3 = \big[ z_0, Y^{1/4} \big)$. Hence,
  \begin{equation}\label{e4.5}
    R_0(n) \le \sum_{m \in \mathcal A} \Phi(m, z),
  \end{equation}
  where $z$ is any parameter with $2 \le z \le X^{1/2}$. We now proceed to apply Lemma \ref{lB} to the right side of \eqref{e4.5}.

  When $X < n \le X + H$, $|\mathcal A|$ is the number of products $p_1p_2$, where
  \[
    p_1 \in \mathbf I_3, \quad p_1p_2 \in \mathbf I_2, \quad p_1 \le p_2.
  \]
  Thus, upon writing $\mathbf J(p)$ for the interval defined by the conditions $px \in \mathbf I_2$ and $x \ge p$, we deduce from the Prime Number Theorem that
  \[
    |\mathcal A| = N + O \big( Y^{1/2}\exp\big( -L^{1/2} \big) \big),
  \]
  where
  \begin{equation}\label{e4.6}
    N = \sum_{p \in \mathbf I_3} \int_{\mathbf J(p)} \frac {du}{\log u} \ll Y^{1/2}L^{-2}\log L.
  \end{equation}
  Suppose that $d$ is a squarefree integer, with $d \le Y^{1/8}$. Then
  \[
    |\mathcal A_d| = \sum_{ h \in \mathcal R_d} \sum_{p_1 \in \mathbf I_3} \sum_{ \substack{ p_2 \in \mathbf J(p_1)\\ p_1p_2 \equiv h \pmodulo d}} 1,
  \]
  where $\mathcal R_d$ represents a maximal set of incongruent solutions of $x^2 \equiv n \pmodulo d$. In particular, we have $|\mathcal A_d| = 0$ when $(d, n) > 1$. We now define
  \[
    r(d) = |\mathcal A_d| - g(d)N, \qquad
    g(d) = \begin{cases}
      \phi(d)^{-1}|\mathcal R_d| & \text{if } (n, d) = 1, \\
      0                          & \text{if } (n, d) > 1,
    \end{cases}
  \]
  and note that
  \[
    |\mathcal R_d| = \prod_{p \mid d} \bigg( 1 + \bigg( \frac np \bigg) \bigg),
  \]
  $\big( \frac np \big)$ being the Legendre symbol modulo $p$. We note that when $n \in \mathcal H_2$, $g$ satisfies the hypothesis \eqref{2.10} of Lemma \ref{lB} with $\kappa = 2$. Furthermore, it follows from the above definitions that if $D \le Y^{1/8}$, we have
  \[
    \sum_{d \le D} \mu(d)^2 |r(d)| \le \sum_{p \in \mathbf I_3} \sum_{d \le D} \tau(d) \max_{(a, d) = 1} \max_{x \in \mathbf J(p)} \bigg| \pi(x; d, a) - \frac 1{\phi(d)} \int_2^x \frac {dt}{\log t} \bigg|,
  \]
  where $\pi(x; d, a)$ is the number of primes $p \equiv a \pmodulo d$ with $p \le x$. The sum over $d$ can be estimated by means of the Bombieri--Vinogradov theorem and Cauchy's inequality. Thus, for any fixed $A > 0$ and any $D \le Y^{1/8}L^{-B(A)}$, we obtain the bound
  \[
    \sum_{d \le D} \mu(d)^2 |r(d)| \ll \sum_{p \in I_3} Y^{1/2}p^{-1}L^{-A} \ll Y^{1/2}L^{-A}.
  \]

  We now apply Lemma \ref{lB} with $D = Y^{1/9}$ and $z = \exp\big( L^{1 - \delta/2} \big)$ to the sequence $\mathcal A$. We get
  \begin{equation}\label{e4.7}
    \sum_{m \in \mathcal A} \Phi(m, z) \ll N \prod_{p \le z} \big( 1 - g(p) \big) + Y^{1/2}L^{-A},
  \end{equation}
  where $A > 0$ can be taken arbitrarily large. Comparing the definition of $g$ and \eqref{2.11a}, we find that when $n \in \mathcal H_2$,
  \begin{align}\label{e4.8}
    \prod_{p \le z} \big( 1 - g(p) \big) \ll \prod_{p \le z} \bigg( 1 - \frac 1p \bigg) \cdot
    \prod_{p \le z} \big( 1 + A_2(n, p) \big) \ll \frac {\mathfrak P_2(n, z)}{\log z}.
  \end{align}
  Here, $\mathfrak P_2(n, z)$ is the partial singular product defined in \eqref{2.11}. Combining the lower bound \eqref{e3.8} and inequalities \eqref{e4.5}--\eqref{e4.8}, we conclude that
  \[
    R_0(n) \ll Y^{1/2}L^{-3 + \delta}\mathfrak P_2(n, z).
  \]
  Finally, by \eqref{2.12} and Lemma \ref{l8} with $x = X$, $y = H$ and $Q = z$, the asymptotic formula
  \[
    \mathfrak P_2(n, z) = \mathfrak S_2(n, P)\big( 1 + O \big( L^{-1} \big) \big)
  \]
  holds for almost all integers $n \in \mathcal H_2 \cap (X, X + H]$, provided that $P \ge L^{A + 5}$.
\end{proof}

\section{Proof of Theorem \ref{thm2}}
\label{s5}

As we stated already in \S\ref{s1}, the proof of Theorem \ref{thm2} makes use of two pairs of functions, $\lambda_1^{\pm}$ and $\lambda_2^{\pm}$ satisfying \eqref{e2.25} and \eqref{e2.26}, respectively. We borrow the functions $\lambda_1^{\pm}$ from Baker, Harman and Pintz \cite{BaHaPi97}: we choose $\lambda_1^-(m) = a_0(m)$ and $\lambda_1^+(m) = a_1(m)$, where $a_0$ and $a_1$ are the functions constructed in \cite{BaHaPi97} (see \cite[\S4]{BaHaPi97} for details). We note that, by construction, these functions satisfy hypotheses (A$_{1.1}$) and (A$_{1.2}$) of Proposition~\ref{p1} when $\theta_1 \ge 0.55 + \eps$.

Next, we turn to the construction of $\lambda_2^{\pm}$. As hypothesis (A$_{2.3}$) is the most demanding among the requirements imposed on $\lambda_2$ in Proposition \ref{p1}, our construction focuses on satisfying that hypothesis. Let
\begin{equation}\label{e5.1}
  U = Y^{\eps/2}, \quad V = HY^{-1/2 - \eps/2}, \quad W = Y^{1/2}V^{-1}.
\end{equation}
Recall also the definition of $z_0$ in \eqref{e2.22}. We apply twice Buchstab's identity
\begin{equation}\label{e5.2}
  \Phi(m, z) = \Phi(m, w) - \sum_{\substack{ w < p \le z\\ p \mid m}} \Phi\big( mp^{-1}, p \big) \qquad (2 \le w < z)
\end{equation}
to decompose $\lambda_0$ as follows:
\begin{align}\label{e5.3}
  \lambda_0(m) &= \Phi(m, V) - \sum_{\substack{ V < p \le z_0\\ p \mid m}} \Phi\big( mp^{-1}, V \big) + \sum_{\substack{ V < p_2 < p_1 \le z(p_2) \\ p_1p_2 \mid m}} \Phi\big( m(p_1p_2)^{-1}, p_2 \big) \notag\\
  &= \gamma_1(m) - \gamma_2(m) + \gamma_3(m), \quad \text{say}. 
\end{align}
Here, we have $z(p) = \min\big( z_0, Y^{1/2}p^{-2} \big)$. In particular, when $\theta_2 \ge \frac 23 + \eps$, the sum $\gamma_3$ is empty and \eqref{e5.3} turns into \eqref{e4.1}. We now split $\gamma_3(m)$ into two subsums. We have
\begin{equation}\label{e5.4}
  \gamma_3(m) = \bigg\{ \sum_{\substack{ \cdots\\ p_1p_2 < W}} + \sum_{\substack{ \cdots\\ p_1p_2 \ge W}} \bigg\} \Phi\big( m(p_1p_2)^{-1}, p_2 \big) = \gamma_4(m) + \gamma_5(m), \quad \text{say},
\end{equation}
where the $\cdots$ represent the summation conditions $V < p_2 < p_1 \le z(p_2)$ and $p_1p_2 \mid m$. We are now in position to define $\lambda_2^-$. We set
\begin{equation}\label{e5.5}
  \lambda_2^-(m) = \gamma_1(m) - \gamma_2(m) + \gamma_5(m).
\end{equation}
Note that, by \eqref{e5.3} and \eqref{e5.4}, we have $\lambda_2^-(m) = \lambda_0(m) - \gamma_4(m)$, so $\lambda_2^-$ satisfies \eqref{e2.26}. Furthermore, by virtue of \eqref{e2.22} and \eqref{e5.1}, we can use Lemma \ref{l3} to verify hypothesis (A$_{2.3}$) for $\gamma_1$ and $\gamma_2$. Finally, in $\gamma_5$, we have $U \le m(p_1p_2)^{-1} \le V$, so we can apply Lemma \ref{l2} with $(m, k) = \big( m(p_1p_2)^{-1}, p_1p_2 \big)$ to verify hypothesis (A$_{2.3}$) for $\gamma_5$. We conclude that $\lambda_2^-$ satisfies both \eqref{e2.26} and hypotheses (A$_{2.1}$) and (A$_{2.3}$) of Proposition~\ref{p1}. When $\theta_2 > \frac 7{12}$, $\lambda_2^-$ satisfies also hypothesis (A$_{2.2}$), though this may require some explanation.

As we mentioned earlier, hypothesis (A$_{2.2}$) holds for $\lambda_2 = \varpi$ when $\theta_2 > \frac 7{12}$. One way to prove this is to use \eqref{e5.2} to decompose $\varpi$ into a linear combination of functions similar to our $\gamma_i$'s and then to establish hypothesis (A$_{2.2}$) for each function in that decomposition. Applying that same decomposition to $\lambda_2^-$ instead to $\varpi$ is equivalent to taking the intersection of two partitions of a set. Therefore, such a decomposition of $\lambda_2^-$ will produce more terms than the respective decomposition of $\varpi$, but every such term will be a subsum of a sum appearing in the decomposition of $\varpi$. Thus, the same results, which establish (A$_{2.2}$) for all terms in the decomposition of $\varpi$, will establish (A$_{2.2}$) for all terms in the decomposition of $\lambda_2^-$.

We now proceed with the construction of $\lambda_2^+$. By \eqref{e5.2},
\begin{align}\label{e5.6}
  \lambda_0(m) &= \Phi(m, V) - \sum_{\substack{ V < p \le z_1\\ p \mid m}} \Phi\big( mp^{-1}, p \big) - \sum_{\substack{ z_1 < p \le z_0\\ p \mid m}} \Phi\big( mp^{-1}, p \big) \notag\\
  &= \beta_1(m) - \beta_2(m) - \beta_3(m), \quad \text{say}. 
\end{align}
Here, $z_1 = \max \big( V, z_0^{1/2} \big)$. Note that when $\theta_2 \ge \frac 58$, $z_1 = V$ and the sum $\beta_2$ is empty. Suppose now that $\theta_2 < \frac 58$ (and hence, $z_1 = z_0^{1/2}$). We apply \eqref{e5.2} two more times to $\beta_2$:
\begin{align}\label{e5.7}
  \beta_2(m) &= \sum_{\substack{ V < p \le z_1\\ p \mid m}} \Phi\big( mp^{-1}, V \big) - \sum_{\substack{ V < p_2 < p_1 \le z_1\\ p_1p_2 \mid m}} \Phi\big( m(p_1p_2)^{-1}, V \big) \notag\\
  &\qquad \qquad + \sum_{\substack{ V < p_3 < p_2 < p_1 \le z_1 \\ p_1p_2p_3 \mid m}} \Phi\big( m(p_1p_2p_3)^{-1}, p_3 \big) \notag \\
  &= \beta_4(m) - \beta_5(m) + \beta_6(m), \quad \text{say}. 
\end{align}
We define
\begin{equation}\label{e5.8}
  \lambda_2^+(m) = \beta_1(m) - \beta_4(m) + \beta_5(m).
\end{equation}
By \eqref{e5.6} and \eqref{e5.7}, we have $\lambda_2^+(m) = \lambda_0(m) + \beta_3(m) + \beta_6(m)$, so $\lambda_2^+$ satisfies \eqref{e2.26} and hypothesis (A$_{2.1}$) of Proposition~\ref{p1}. Hypothesis (A$_{2.2}$) holds when $\theta_2 > \frac 7{12}$ for the same reasons as in the case of $\lambda_2^-$. Finally, $\lambda_2^+$ satisfies hypothesis (A$_{2.3}$), because Lemma \ref{l3} can be applied to each of the three terms on the right side of~\eqref{e5.8}.

Suppose now that $\lambda_i^{\pm}$ are the above functions and that $\theta_1 \ge 0.55 + \eps$ and $\frac 7{12} < \theta_2 \le \frac 23$. With these choices, we can apply Proposition \ref{p1} to each of the three terms on the right side of \eqref{e2.27}. We deduce that
\begin{equation}\label{e5.9}
  R(n; \varpi, \lambda_0) \ge \mathfrak S_2(n, P)\mathfrak I(n)(1 + o(1))
\end{equation}
for almost all $n \in \mathcal H_2 \cap (X, X + H]$. Here,
\[
  \mathfrak I(n) = \sum_{ \substack{ m_1 + m_2^2 = n\\ m_j \in \mathbf I_j}} \big( f_1^+(m_1)f_2^-(m_2) + f_1^-(m_1)f_2^+(m_2) - f_1^+(m_1)f_2^+(m_2) \big),
\]
$f_j^{\pm}$ being the smooth functions appearing in hypotheses (A$_{j.2}$).

The functions $f_j^{\pm}$ arise via applications of Lemma \ref{l5}. For example, when $\theta_2 > \frac 58$, we have $\lambda_2^+(m) = \Phi(m, V)$, and Lemma \ref{l5} gives
\[
  \sum_{m \le x} \lambda_2^+(m) = \frac 1{\log V} \sum_{z < m \le x} w \bigg( \frac {\log m}{\log V} \bigg) + O \big( Y^{1/2}L^{-A} \big)
\]
for any fixed $A > 0$ and any $x \le Y^{1/2}$. Hence, in this case, we have
\[
  f_2^+(m) = \frac 1{\log V} w \bigg( \frac {\log m}{\log V} \bigg) \qquad (m \ge V).
\]
Furthermore, the functions $f_j^{\pm}$ satisfy asymptotic formulas of the form
\begin{equation}\label{e5.10}
  \sum_{ \substack{m \in \mathbf I_j\\ m \le x}} f_j^{\pm}(m) = \big( \sigma_j^{\pm} + O\big( L^{-1} \big) \big) \sum_{ \substack{m \in \mathbf I_j\\ m \le x}} \frac 1{\log m},
\end{equation}
where $\sigma_j^{\pm} = \sigma_j^{\pm}( \theta_j )$ are numbers depending only on $\theta_1$ and $\theta_2$. The values of $\sigma_1^{\pm}$ are estimated in \cite{BaHaPi97}: when $\theta_1 \ge 0.55 + \eps$, we have
\begin{equation}\label{e5.11}
  \sigma_1^+ < 1.01, \quad \sigma_1^- > 0.99.
\end{equation}
On the other hand, the values of $\sigma_2^{\pm}$ arising from the above construction of $\lambda_2^{\pm}$ are
\begin{align*}
  \sigma_2^- &= 1 - \iint_{\mathcal D_2^-} w \bigg( \frac {1 - u_1 - u_2}{u_2} \bigg) \, \frac {du_1du_2}{u_1u_2^2} + O(\eps),\\
  \sigma_2^+ &= 1 + \int_{1/4}^{1/2} w \bigg( \frac {1 - u}{u} \bigg) \, \frac {du}{u^2} + \iiint_{\mathcal D_2^+} w \bigg( \frac {1 - u_1 - u_2 - u_3}{u_3} \bigg) \, \frac {du_1du_2du_3}{u_1u_2u_3^2} + O(\eps),\\
\end{align*}
where
\begin{align*}
  \mathcal D_2^-:& \quad 2\theta_2 - 1 < u_2 < u_1 < \sfrac 12, \; u_1 + 2u_2 < 1, \; u_1 + u_2 < 2 - 2\theta_2, \\
  \mathcal D_2^+:& \quad 2\theta_2 - 1 < u_3 < u_2 < u_1 < \sfrac 14.
\end{align*}
A computer calculation then yields
\begin{equation}\label{e5.12}
  \sigma_2^-(\sfrac 35) > 0.22, \quad \sigma_2^+(\sfrac 35) < 2.26.
\end{equation}
Combining \eqref{e5.10}--\eqref{e5.12}, we get
\[
  \mathfrak I(n) \ge r_2(n) \big( 0.17 + O \big( L^{-1} \big) \big)
\]
Inserting this bound into \eqref{e5.9}, we obtain
\begin{equation}\label{e5.13}
  R(n; \varpi, \lambda_0) \ge \mathfrak S_2(n, P)r_2(n)(0.17 + o(1))
\end{equation}
for almost all $n \in \mathcal H_2 \cap (X, X + H]$.

Finally, we choose $\theta_1 = 0.55 + \eps$ and $\theta_2 = \frac 35 - 2\eps$. Theorem \ref{thm2} is a direct consequence of \eqref{e2.23}, \eqref{e5.13} and Proposition \ref{p2}. \qed

\section{Sums of three and four squares}
\label{s6}

\subsection{Proof of Theorem \ref{thm3}}
\label{s6.1}

The argument is similar to the proof of Theorem \ref{thm2}, so we only outline the differences between the two proofs. Let $R_3(n)$ denote the number of representations of $n$ in the form
\[
  R_3(n) = \sum_{ \substack{ p_1^2 + p_2^2 + p_3^2= n\\ p_1^2 + p_2^2 \in \mathbf I_1, p_3 \in \mathbf I_2\\ }} 1.
\]
In place of the quantity defined in \eqref{e2.1}, we use
\begin{equation}\label{e6.1}
  R(n; \lambda_1, \lambda_2) = \sum_{ \substack{ m_1^2 + m_2^2 + m_3^2= n\\ m_1^2 + m_2^2 \in \mathbf I_1, m_3 \in \mathbf I_2,\\ }} \!\!\! \lambda_1(m_1, m_2)\lambda_2(m_3).
\end{equation}
We set $\lambda_1(m, k) = \varpi(m)\varpi(k)$ and $\lambda_2(m) = \lambda_2^-(m)$, where $\lambda_2^-$ is the function defined in \eqref{e5.5}. Similarly to \eqref{e2.23} and \eqref{e2.27}, we have
\begin{equation}\label{e6.2}
  R_3(n) \ge R(n; \lambda_1, \lambda_2) - R_0(n),
\end{equation}
where $R_0(n)$ is the number of solutions of the equation
\begin{equation*}
  p_1^2 + p_2^2 + (p_3p_4)^2 = n
\end{equation*}
in primes $p_1, \dots, p_4$ subject to
\begin{equation*}
  p_1^2 + p_2^2 \in \mathbf I_1, \quad z_0 < p_3 \le Y^{1/4}, \quad p_3 \le p_4, \quad p_3p_4 \in \mathbf I_2.
\end{equation*}

Suppose again that $A > 0$ is a fixed (large) real and set
\begin{equation}\label{e6.3}
  P = L^B, \quad Q_0 = YP^{-3}, \quad Q = HP^{-1},
\end{equation}
where $B$ is a parameter to be chosen later in terms of $A$. Similarly to Proposition \ref{p2}, one can show that
\begin{equation}\label{e6.4}
  R_0(n) \ll \mathfrak S_3(n, P)Y^{1/2}L^{-4+\delta}
\end{equation}
for almost all $n \in \mathcal H_3 \cap (X, X + H]$. Here, $\mathfrak S_3(n, P)$ is defined by \eqref{2.11}.

Next, we use the circle method to evaluate the quantity $R(n; \lambda_1, \lambda_2)$ in \eqref{e6.2}. The orthogonality relation \eqref{e2.2} holds with $S_1(\alpha)$ replaced by the sum
\[
  S_1(\alpha) = \sum_{p_1^2 + p_2^2 \in \mathbf I_1} e\big( \alpha (p_1^2 + p_2^2) \big).
\]
We define the sets of major and minor arcs as before. By the discussion in \S\ref{s5}, $\lambda_2$ satisfies hypotheses (A$_{2.j}$) in \S\ref{s1}. Since
\[
  I_1 = \int_0^1 |S_1|^2 \, d\alpha = \sum_{m \in \mathbf I_1} \bigg( \sum_{p_1^2 + p_2^2 = m} 1 \bigg)^2 \ll YL^3,
\]
we obtain similarly to \eqref{e2.20} that
\begin{equation}\label{e6.5}
  \sum_{X < n \le X + H} \bigg| \int_{\mathfrak m} S_1(\alpha)S_2(\alpha)e(-\alpha n) \, d\alpha \bigg|^2 \ll HYL^{-A}.
\end{equation}
Furthermore, similarly to \eqref{e2.9} and \eqref{e2.12}, we have
\begin{equation}\label{e6.6}
  \int_{\mathfrak M} |S_1(S_2 - S_2^*)| \, d\alpha \ll Y^{1/2}P^{-1/2 + \eta}
\end{equation}
and (recall \eqref{e2.11a})
\begin{equation}\label{e6.7}
  \int_{\mathfrak m_0} |S_1S_2^*| \, d\alpha \ll Y^{1/2}P^{-1/2 + \eta}.
\end{equation}
Define
\[
  T_1(\beta) = \sum_{m \in \mathbf I_1} f_1(m)e(\beta m), \quad f_1(m) = \int_0^1 \frac {du}{\sqrt{u(1 - u)}(\log mu)(\log m(1 - u))}.
\]
When $\alpha \in \mathfrak M(q, a) \cap \mathfrak M_0$ and $\theta_1 > \frac 7{12}$, a variant of Mikawa and Peneva \cite[Lemma 3]{MiPe07} yields
\[
  | S_1(\alpha) - S_1^*(\alpha)| \ll q^{\eta} YP^{-10}(1 + Y|\alpha - a/q|),
\]
where
\[
  S_1^*(\alpha) = \frac {\pi}4 \frac {S(q, a)^2}{\phi(q)^2} T_1(\alpha - a/q).
\]
Hence,
\begin{equation}\label{e6.8}
  \int_{\mathfrak M_0} |(S_1 - S_1^*)S_2^*| \, d\alpha \ll Y^{1/2}P^{-1 + \eta}.
\end{equation}
Finally, we have
\begin{equation}\label{e6.9}
  \int_{\mathfrak M_0} S_1^*(\alpha)S_2^*(\alpha)e(-\alpha n) \, d\alpha = \frac {\pi}4\mathfrak S_3(n, P)\mathfrak I(n; \lambda_2) + O \big(Y^{1/2}P^{-1} \big),
\end{equation}
where $\mathfrak S_3(n, P)$ is defined in \eqref{2.11} and
\[
  \mathfrak I(n; \lambda_2) = \int_{-1/2}^{1/2} T_1(\beta)T_2(\beta)e(-\beta n) \, d\beta = \sum_{ \substack{ m_1 + m_2^2 = n\\ m_i \in \mathbf I_i}} f_1(m_1)f_2^-(m_2).
\]
Combining \eqref{e6.6}--\eqref{e6.9}, we conclude that
\begin{equation}\label{e6.10}
  \int_{\mathfrak M} S_1(\alpha)S_2(\alpha)e(-\alpha n) \, d\alpha = \frac {\pi}4\mathfrak S_3(n, P)\mathfrak I(n; \lambda_2) + O \big(Y^{1/2}P^{-1/2 + \eta} \big).
\end{equation}

From \eqref{e6.2}, \eqref{e6.4}, \eqref{e6.5} and \eqref{e6.10}, we obtain that
\[
  R_3(n) \gg \mathfrak S_3(n, P)Y^{1/2}L^{-3}
\]
for almost all $n \in \mathcal H_3 \cap (X, X + H]$, provided that $\theta_1 > \frac 7{12}$ and the value of $\sigma_2^-(\theta_2)$ in \S\ref{s5} is positive. In particular, upon choosing $\theta_1 = \frac 7{12} + \eps$ and $\theta_2 = \frac 35 - 2\eps$, we deduce Theorem \ref{thm3}. \qed

\subsection{Proof of Corollary \ref{c1}}
\label{s6.2}

Let $E_4'(X)$ be the number of exceptional integers $n$ counted by $E_4(X)$ with $n \not\equiv 1 \pmodulo 5$, and let $E_4''(X) = E_4(X) - E_4'(X)$. By a result of Harman, Watt and Wong \cite[Theorem 3]{HaWaWo04}, there exist prime numbers $q_1$ and $q_2$ such that 
\[
  X^{1/2} - X^{0.2625} < q_j \le X^{1/2} - \sfrac 12X^{0.2625}, \quad q_j \equiv j \pmodulo 5. 
\]
If $n$ is counted by $E_4'(X + H) - E_4'(X)$, then $n - q_1^2$ is counted by $E_3(X_1 + H) - E_3(X_1)$, where $X_1 \asymp X^{0.7625}$. Since $H \ge X_1^{7/20}$, Theorem \ref{thm3} yields
\begin{equation}\label{e6.11}
  E_4'(X + H) - E_4'(X) \le E_3(X_1 + H) - E_3(X_1) \ll HL^{-A},
\end{equation}
for any fixed $A > 0$. Similarly, if $n$ is counted by $E_4''(X + H) - E_4''(X)$, then the integer $n - q_2^2$ is counted by $E_3(X_2 + H) - E_3(X_2)$, where $X_2 \asymp X^{0.7625}$. Hence, Theorem \ref{thm3} yields
\begin{equation}\label{e6.12}
  E_4''(X + H) - E_4''(X) \le E_3(X_2 + H) - E_3(X_2) \ll HL^{-A},
\end{equation}
for any fixed $A > 0$. The result follows from \eqref{e6.11} and \eqref{e6.12}. \qed

\begin{acknowledgement}
  The bulk of this work was completed when the first author visited Shandong University in July of 2007. He would like to use this occasion to express his gratitude to the School of Mathematics for the financial support and the excellent working conditions. The second author is supported by the 973 Program, NSFC Grant \#10531060, and Ministry of Education Grant \#305009.
\end{acknowledgement}

\bibliographystyle{amsplain}

\begin{thebibliography}{10}

\providecommand{\bysame}{\leavevmode\hbox to3em{\hrulefill}\thinspace}

\bibitem{BaHaPi97}
R.~C. Baker, G.~Harman, and J.~Pintz, \emph{The exceptional set for
  {Goldbach's} problem in short intervals}, Sieve {Methods}, {Exponential}
  {Sums} and their {Applications} in {Number} {Theory}, Cambridge University
  Press, 1997, pp.~1--54.

\bibitem{Baue99}
C.~Bauer, \emph{On the exceptional set for the sum of a prime and the $k$th
  power of a prime}, Studia Sci. Math. Hungar. \textbf{35} (1999), 291--330.

\bibitem{BaLiZh00}
C.~Bauer, M.~C. Liu, and T.~Zhan, \emph{On a sum of three prime squares}, J.
  Number Theory \textbf{85} (2000), 336--359.

\bibitem{BrFo94}
J.~Br\"udern and E.~Fouvry, \emph{Lagrange's {Four} {Squares} {Theorem} with
  almost prime variables}, J. Reine Angew. Math. \textbf{454} (1994), 59--96.

\bibitem{Gall70}
P.~X. Gallagher, \emph{A large sieve density estimate near $\sigma = 1$},
  Invent. Math. \textbf{11} (1970), 329--339.

\bibitem{Harm83a}
G.~Harman, \emph{On the distribution of $\alpha p$ modulo one}, J. London Math.
  Soc. (2) \textbf{27} (1983), 9--18.

\bibitem{Harm96}
\bysame, \emph{On the distribution of $\alpha p$ modulo one. {II}}, Proc.
  London Math. Soc. (3) \textbf{72} (1996), 241--260.

\bibitem{HaKu06}
G.~Harman and A.~V. Kumchev, \emph{On sums of squares of primes}, Math. Proc.
  Cambridge Philos. Soc. \textbf{140} (2006), 1--13.

\bibitem{HaWaWo04}
G. Harman, N. Watt, and K.~C. Wong, \emph{A new mean-value result for Dirichlet
  $L$-functions and polynomials}, Quart. J. Math. Oxford (2) \textbf{55} (2004), 
  307--324.

\bibitem{Hua38}
L.~K. Hua, \emph{Some results in prime number theory}, Quart. J. Math. Oxford
  \textbf{9} (1938), 68--80.

\bibitem{Iwan80a}
H.~Iwaniec, \emph{Rosser's sieve}, Acta Arith. \textbf{36} (1980), 171--202.

\bibitem{Kumc06a}
A.~V. Kumchev, \emph{On {Weyl} sums over primes and almost primes}, Michigan
  Math. J. \textbf{54} (2006), 243--268.

\bibitem{LeLi93}
M.~C. Leung and M.~C. Liu, \emph{On generalized quadratic equations in three
  prime variables}, Monatsh. Math. \textbf{115} (1993), 133--169.

\bibitem{LiZh97}
J.~Y. Liu and T.~Zhan, \emph{On a theorem of {Hua}}, Arch. Math. (Basel)
  \textbf{69} (1997), 375--390.

\bibitem{LiZh98}
\bysame, \emph{Sums of five almost equal prime squares. {II}}, Sci. China
  \textbf{41} (1998), 710--722.

\bibitem{LiZh01}
\bysame, \emph{Distribution of integers that are sums of three squares of
  primes}, Acta Arith. \textbf{98} (2001), 207--228.

\bibitem{LiZh05}
\bysame, \emph{The exceptional set in {Hua's} theorem for three squares of
  primes}, Acta Math. Sinica (N.S.) \textbf{21} (2005), 335--350.

\bibitem{Mika97}
H.~Mikawa, \emph{On the sum of three squares of primes}, Analytic {Number}
  {Theory}, Cambridge University Press, 1997, pp.~253--264.

\bibitem{MiPe07}
H.~Mikawa and T.~Peneva, \emph{Sums of three prime squares},  Boll. Un. Mat.
  Ital. B (8), \textbf{10} (2007), 549--558.

\bibitem{MoVa75}
H.~L. Montgomery and R.~C. Vaughan, \emph{The exceptional set in {Goldbach's}
  problem}, Acta Arith. \textbf{27} (1975), 353--370.

\bibitem{SaVa77}
B.~Saffari and R.~C. Vaughan, \emph{On the fractional parts of $x/n$ and
  related sequences. {II}}, Ann. Inst. Fourier \textbf{27} (1977), no.~2,
  1--30.

\bibitem{Schw61}
W.~Schwarz, \emph{Zur {Darstellung} von {Zahlen} durch {Summen} von
  {Primzahlpotenzen}. {II}}, J. Reine Angew. Math. \textbf{206} (1961),
  78--112.

\bibitem{Tene95}
G.~Tenenbaum, \emph{Introduction to Analytic and Probabilistic Number Theory},
  Cambridge University Press, 1995.

\bibitem{IVin37a}
I.~M. Vinogradov, \emph{Representation of an odd number as the sum of three
  primes}, Dokl. Akad. Nauk SSSR \textbf{15} (1937), 291--294, in Russian.

\end{thebibliography}

\end{document}